\documentclass[12pt]{article}
\usepackage{graphicx} 

\usepackage[french]{minitoc}


\usepackage{float}

\usepackage{amsmath,amsfonts,amssymb,mathrsfs}

\usepackage{color} 

\usepackage{graphics}
\usepackage{graphicx}

\usepackage{geometry} 
\usepackage{enumitem}

\usepackage{pifont}

\newtheorem{theorem}{Theorem}[section]

\newtheorem{proposition}[theorem]{Proposition}
\newtheorem{example}[theorem]{Example}
\newtheorem{remarque}{Remark}
\newtheorem{definition}{Definition}
\newtheorem{proof}[theorem]{Proof}

\newtheorem{notation}{Notation}

\title{New Characterization of  regional controllability and controllability  of deterministic cellular automata via topological and symbolic dynamics notions}

\author{Sara Dridi  \\  University of Setif 1, Algeria \\ email:   sara.dridi@univ-setif.dz}


\begin{document}

\maketitle

\begin{abstract}

This article presents a new characterization of controllability and regional controllability of Deterministic Cellular Automata (CA for short). It focuses on analyzing these problems within the framework of control theory, which have been extensively studied for continuous systems modeled by partial differential equations (PDEs). In the analysis of linear systems, the Kalman rank condition is ubiquitous and has been used to obtain the main results characterizing controllability.
The aim of this paper is to highlight new ways to prove the regional controllability and controllability of CA using concepts from symbolic dynamics instead of using the Kalman condition. Necessary and sufficient conditions are given using the notions of chain transitive, chain mixing, trace approximation, transitive SFT and mixing SFT. Finally, we demonstrate that the presence of visibly blocking words implies that the cellular automaton is not controllable.

\color{black}
\end{abstract}
\textbf{keywords}: Regional controllability, Controllability,  Cellular Automata, Chain transitive, Chain mixing, Trace approximation, Transitive SFT, Mixing SFT, Visibly blocking words. 

\section*{Introduction}
Control theory is an area of applied mathematics that bridges mathematics and technology, dealing with the use of feedback to affect the behaviour of a system in order to achieve a desired goal. The systems under consideration can include differential equations, systems with noise, discrete systems,  or systems with delay, among others. Consequently, control theory integrates various branches of mathematics and is closely tied to engineering challenges. This interdisciplinary study has produced a vast body of literature, with applications spanning fields such as aerospace, biomedical engineering, electrical systems, electronics, economics, and more \cite{intriligator2005applications,palm1975application}.\\
Controllability is a fundamental concept in system analysis, was introduced by R. Kalman in the 1960s and has been significantly developed over the past two centuries. Controllability refers to the ability to design control inputs that guide the system’s state to desired values within a specified time interval $[0, T]$. For linear, finite-dimensional systems, controllability is defined by the well-known Kalman rank condition established by Kalman during this period. The challenge is significantly more complex when dealing with nonlinear and infinite-dimensional systems. Controllability has been investigated for both lumped and distributed parameter systems, which are commonly modeled using partial differential equations with variables and parameters that vary over time and space. These systems include inputs and outputs that enable interaction with their environment.\\
In most control problems studied to date, control inputs are applied across the entire domain of the system. However, for many practical applications, it is more cost-effective and convenient  to achieve specific objectives within only a portion of the domain. This motivated the introduction of new concept that is of  regional controllability  by El Jai and Zerrik \cite{el1995regional} in 1995, focusing primarily on linear distributed parameter systems. This problem has  been further explored in subsequent studies \cite{zerrik2000actuators,ztot2011regional}. \\
 In the context of distributed parameter systems written by partial differential equations (PDEs), the term regional refers to control problems where the desired state is defined and potentially attainable only within a specific part $\omega$ of the overall domain. Regional controllability  is regarded as a specific instance of output controllability. The problem can be  broadly described as determining how to apply suitable controls to steer the system’s state from an initial state to a desired state at a given time $T$ within a subregion $\omega$, which can be located in the interior or on the boundary of the domain. Additionally, the control actions can be implemented within the interior of the domain or along its boundaries.\\
Due to the complexity of distributed parameter systems typically represented by partial differential equations, there is a need for new and more suitable modeling approaches for these systems. In this regard, cellular automata are frequently viewed as a promising alternative to partial differential equations for addressing such complex systems.\\
In this paper, we aim to explore regional controllability and controllability  problems using cellular automata, which are frequently regarded as an effective alternative to partial differential equations.\\
Cellular automata (CA) are among the simplest models for spatially extended systems and can effectively describe complex phenomena. 
 These systems are discrete in terms of space, time, and the states representing physical quantities. Their evolution is driven by simple, local transition rules that may exhibit a complex behaviour. They are used to model various problems in physical, chemical, ecological, and biological phenomena. 
They represent an inherently parallel computational model, consisting of a regular grid of cells, where each cell interacts with its neighbors to determine its next state. In classical synchronous cellular automata, all cells update their values simultaneously in discrete time steps, following deterministic or probabilistic transition rules based on the states of neighboring cells.  They are widely used to study the mathematical properties of discrete systems and to model physical systems. However, controlling systems described by CA presents significant challenges. The techniques used to control discrete systems differ greatly from those used for continuous ones, as discrete systems are typically highly nonlinear, making standard linear approximations inapplicable. In this study, we focus on Boolean deterministic one dimensional CA. \\
A Boolean one dimensional cellular automaton consists of cells that  can be organized in a linear chain. The state of each cell is drawn from a discrete set of values represented by 1 or 0, with $A=\{0,1\}$. The state of each cell changes over time based on the states of its neighboring cells and a local transition function, which defines the connections between them. This function can be deterministic or probabilistic, synchronous or asynchronous, and either linear or nonlinear.\\
Several regional control problems have been explored using cellular automata (CA).  In  \cite{dridi2019markov,dridi2020boundary}, a Markov Chain-based approach was employed to demonstrate the regional controllability of 1D and 2D CA, as an alternative to the traditional Kalman criterion. Another approach, based on graph theory, was also explored, leading to the development of necessary and sufficient conditions for the controllability of 1D and 2D cellular automata \cite{dridi2019graph}. The problems of regional controllability and observability have been addressed using the Kalman condition \cite{dridi2022kalman,el2021some}.\\
In this paper, we continue our investigation into controllability and regional controllability problems of one dimensional deterministic cellular automata. We shall develop a selective review on newly achieved properties linking regional controllability, controllability  and notions of symbolic dynamics. \\
Symbolic dynamics is a branch of mathematics that examines the behavior of sequences formed from a finite alphabet, focusing on their evolution under the action of a shift operator. A key concept in this field is the subshift, a subset of all possible infinite sequences that adhere to specific rules or constraints. Subshifts provide a combinatorial framework for modeling dynamical systems, allowing their evolution to be understood in terms of symbolic patterns. Among these, subshifts of finite type (SFT) which are defined over a finite set of symbols, comprises all infinite (or bi-infinite) symbol sequences that avoid a specified finite set of forbidden words, governed by the action of the shift map, making them computationally manageable while retaining significant structural depth. This dual simplicity and richness position SFTs as a vital link between symbolic dynamics and broader areas of dynamical systems theory. \\
Observing a one-dimensional cellular automaton through a finite window results in a subshift known as a trace subshift. These trace subshifts play a crucial role in analyzing and gaining deeper insights into the dynamics of cellular automata.  Within this framework, the concepts of trace subshift,  chain transitivity and chain mixing provide powerful tools for investigating the regional controllability and controllability of CA. Chain transitivity characterizes a system where, for any two configurations, a sequence of intermediate states can approximate a path connecting them through the system's evolution, highlighting the robustness and interconnected nature of its state space. Chain mixing, a stronger property, ensures that the system can uniformly approximate transitions between any two configurations over time, signifying a higher degree of dynamical unpredictability and uniformity.\\
Following the next section, which reviews essential definitions about topological dynamics, symbolic dynamics and cellular automata, the paper is structured into three main parts as outlined below.

\begin{itemize}
    \item in section 2:  we shall introduce the problem of regional controllability and controllability of CA. 
    \item in section 3: we present the results,  we show that regional controllable CA for every $n$ is automatically  chain transitive (mixing). We prove that regional controllable CA for every $n$ is equivalent to transitive (mixing) SFT for every $n$. Finally, we show that approximation trace is equivalent to regional controllability. 
    \item in section 4: we show that a CA is regional controllable for every $n$ is controllable. Necessary and sufficient conditions are giving using notions: chain transitive (mixing), transitive (mixing) SFT. We demonstrate that the existence of visbibly blocking words implies that CA is not controllable. 
\end{itemize}
Finally conclusion is given.\\

\section{DEFINITIONS AND BACKGROUND}
\subsection{Topological dynamics }
\begin{definition}\cite{kurka2003topological}
A topological dynamical system consists of a pair $(X,F)$ (or simply $F$ when there is no ambiguity), where $X$ is a nonempty compact metric space and $F: X\longrightarrow X$  is a continuous function.
\end{definition}

\begin{definition}\cite{kurka2003topological}
    The orbit of $F$ is a sequence $(x_{i})_{i\in\mathbb{Z}}$ such that : 
    \begin{eqnarray*}
        F(x_{i})=x_{i+1} \quad \forall i\in \mathbb{Z}
    \end{eqnarray*}
\end{definition}
  Let $(X,F)$ and $(Y,G)$ be two dynamical systems. A continuous map $\rho :X\rightarrow Y$ is called a homomorphism if $\rho F=G\rho$. If $\rho$ is surjective, it is referred to as a factor map, and $(Y,G)$ is considered as a factor of $(X,F)$. A subset $H \subseteq X$ is F-invariant if $F(H ) \subseteq H$. Let $d:X \times X \longrightarrow R_{+}\cup \{0\}$ be the metric considered.
\begin{definition}\cite{kurka2003topological}
\begin{enumerate}
    \item   Let $x,y \in X$,  a (finite or infinite ) sequence  $x=x_{0},x_{1},\dots,x_{n}=y\in X$ is an $\varepsilon$-chain from $x$ to $y$ if there exists $n>0$ such that $d(F(x_{i}),x_{i+1})< \varepsilon$, for all $i\in \{0,1,\dots,n-1\}$.
    \item Infinite $\varepsilon$-chain are called $\varepsilon$-pseudo orbits.
    \end{enumerate}
\end{definition}
\begin{definition}\cite{kurka2003topological}
    The dynamical system $(X,F)$
    \begin{enumerate}
        \item is transitive if for all non-empty opens sets $U,V$ there exists $n>0$ such that $F^{n}(U) \cap V \ne \emptyset$. 
         \item is chain-transitive if for all $x,y\in X$ and $\varepsilon >0$ there exists an $\varepsilon$-chain from $x$ to $y$.
        \item is chain-mixing if for all $x,y \in X$ and $\varepsilon>0$ there exists $N>0$ such that for all $n \ge N$ there exists an $\varepsilon$-chain $x=x_{0},x_{1},\dots,x_{n}=y$  from $x$ to $y$.
        \item is mixing if for all non-empty open sets $U, V$ there exists $N > 0$ such that for all $n \ge  N$ it holds that $F^{n}(U)\cap V \ne \emptyset$.
  \item has pseudo-orbit tracing property
        often also called the shadowing property, if for any $\varepsilon> 0$ there exists
$\delta > 0$ such that any finite $\delta$-chain is $\varepsilon$-shadowed by some point. A point $x \in  X$ $\varepsilon$-shadows a finite sequence $x_{0}, x_{1}, \dots, x_{n}$, if for all $i \le n$, $d(F^{i}(x), x_{i}) < \varepsilon$. A (finite or infinite) sequence $(x_{n})_{n\ge 0}$ is a
$\delta$-chain, if $d(F(x_{n}), x_{n+1}) <\varepsilon$ for all $n$. i.e
\begin{equation*}
\forall \varepsilon>0, \exists \delta >0, \forall x_{0},\dots,x_{n}, (\forall i, d(F(x_{i}),x_{i+1}))<\delta)  \implies \exists x, \forall i, d(F^{i}(x),x_{i})<\varepsilon.
\end{equation*}
 \item is minimal, if any its orbit is dense.
    \end{enumerate}
\end{definition}

\subsection{Symbolic dynamics}
\textbf{Configuration space} Let $A$ be a finite set, and let $A^{\mathbb{Z}}$ represent the configuration space of sequences indexed by $\mathbb{Z}$ with values in $A$. An element $s \in A^{\mathbb{Z}}$ is called a configuration which is an infinite sequence made up of elements from $A$. We denote $s(i) = s_{i}$ for $ i \in \mathbb{Z}$. In other words, it is a function $\mathbb{Z} \rightarrow A$. Let $B \subset \mathbb{Z}$ be a finite set and $x \in A^{B}$. The set $[x]=\{s\in A^{\mathbb{Z}} | s_{B}=x\}$ is called a cylinder. When $A$ is endowed with the discrete topology and $A^{\mathbb{Z}}$ with the product topology, cylinders form a countable clopen (both open and closed) basis for this topology. We treat $A^{\mathbb{Z}}$ as a metric space, with the metric defined as:

$$
d(s,e)=\begin{cases}
			2^{-min(|i||s_{i}\ne e_{i})}, & \text{if} \quad s\ne e\\
            0, & \text{otherwise}
		 \end{cases}
$$
For all $s,e\in A^{\mathbb{Z}}$, it is well-known that this metric induces the same topology as previously defined, and that this space is compact.\\
For any $n \in \mathbb{N}/\{0\}$, let $A^{n}$ represent the set of all finite sequences, or finite words, $x=x_{0}x_{1}\dots x_{n-1}$, where each $x_{i}$ is a letter from the set $A$. The length of $x$, denoted $|x|=n$, is the number of letters in the sequence.\\
\textbf{Shift action}  
The two-sided shift (resp one-sided shift), denoted by $\sigma$ which is a shift map $\sigma:A^{\mathbb{Z}}\rightarrow A^{\mathbb{Z}}$,  is defined by $\sigma(s)_{i}=s_{i+1}$ for every for $s=(s_{l})_{l\in \mathbb{Z}} \in A^{\mathbb{Z}} $ and $i \in \mathbb{Z}$ (or for $s=(s_{l})_{l\in \mathbb{N}} \in A^{\mathbb{N}} $ and $i \in \mathbb{N}$. for the one-sided case). This map is a homeomorphism on $A^{\mathbb{Z}}$. 
 
\begin{definition}
A onesided (resp. twosided) subshift is a closed $\sigma$-invariant (resp. strongly) subset  $\Sigma $ of $A^{\mathbb{N}}$ (resp.  $A^{\mathbb{Z}}$ ).
\end{definition}

Here, we present two classic characterizations, one based on languages and the other on graphs.\\
Let $A^{*}=\bigcup\limits_{n\in\mathbb{N}/\{0\}}^{} A^{n}$, if $L\subset A^{*}$  is a language, the set $\Sigma_{L}=\{s\in A^{\mathbb{Z}} | \forall x \in L, x \not\subset s\}$ of configurations that avoid patterns from $L$ is a subshift. Conversely, for any subshift $\Sigma$, we can associate a language $\mathcal{L}(\Sigma)=\{x\in A^{*} | \exists s \in \Sigma, x \subset s \}$, which consists of the finite patterns that appear in some configuration of $\Sigma$. For any length $k \in \mathbb{N}$, the language of order $k$ for $\Sigma$ is $\mathcal{L}_{k}(\Sigma)=\mathcal{L}(\Sigma) \cap A^{k}$. A forbidden language for a subshift $\Sigma$ is a language $L\subset A^{*}$ such that $\Sigma=\Sigma_{L}$. A subshift is called a subshift of finite type (SFT) if it can be described by a finite forbidden language. It is of order $k\in \mathbb{N}$ (k-SFT) if it admits a forbidden language contained in $A^{k}$.\\
A subshift $\Sigma \subset A^{\mathbb{Z}}$ is transitive if, for any two words $u,v \in \mathcal{L}(\Sigma)$  there exists a word
 $w \in \mathcal{L}(\Sigma)$ such that the concatenation $uwv \in \mathcal{L}(\Sigma)$. It is mixing if, for any words $u,v \in \mathcal{L}(\Sigma)$  there exists a natural number $N \in \mathbb{N}$  such that for all $n \ge N$, there exists a word $w \in \mathcal{L}_{n}(\Sigma)$
with length $n$ that satisfies $uwv \in \mathcal{L}(\Sigma)$ \cite{lind2021introduction}.\\
The concept of strongly connected directed graphs is well established. For each subshift of finite type (SFT), there exists a corresponding directed graph and an associated matrix, as described in \cite{brin2002introduction}. This graph may or may not be strongly connected. Let $G=(V,E)$ denote a directed graph (digraph) with vertex set $V$ and the set of directed edges $E$. Let $\mathcal{M}$ be $m\times m$ its associated  adjacency matrix with entries in $\{0,1\}$. Using this matrix, we construct the directed graph $G=(V,E)$. A directed edge $v \rightarrow w$ is included in $E$ if and only if the entry $\mathcal{M}_{v,w}=1$. A digraph is defined as simple if there is at most one edge from any vertex $v$ to any vertex $w$. It is considered strongly connected if there exists a directed path between every pair of vertices.

\subsection{Cellular automata }
A one-dimensional cellular automaton is a parallel, synchronous computation model represented as $(A, m, d, f)$, made up of cells arranged on a regular lattice indexed by $\mathbb{Z}$. Each cell $c_{i}\in\mathbb{Z}$ holds a state $s_{i} $ from a finite set $A$, and its state evolves based on the states of its neighboring cells $s_{(i-m)},\dots,s_{(i-m+d)}$, following a local rule $f:A^{d}\rightarrow A$. Here, $m\in\mathbb{Z}$ is the anchor, and $d>0$ is the diameter of the automaton. If the anchor $m$ is non-negative, it can be set to $0$, making the automaton one-sided, meaning a cell's state is updated only based on its own state and those of its right-hand neighbors. The global function of the automaton, denoted $F:A^{\mathbb{Z}}\rightarrow A^{\mathbb{Z}}$, is defined such that $F^{i}(s)=F(s_{[i-m,i-m+d]})$ for every $s\in A^{\mathbb{Z}}$ and $i\in \mathbb{Z}$.  
When the neighborhood of the automaton is symmetric, the terms "anchor" and "diameter" are replaced by "radius." A cellular automaton with radius $r\in \mathbb{N}/\{0\}$ has an anchor of $r$ and a diameter of $2r+1$.\\
The shift map $\sigma:A^{\mathbb{Z}}\rightarrow A^{\mathbb{Z}}$, is a specific global function of a cellular automaton.  
According to Hedlund's theorem \cite{hedlund1969endomorphisms}, the global functions of cellular automata are precisely the continuous self-maps of $A^{\mathbb{Z}}$ that commute with the two-sided shift $\sigma$. 
For a cellular automaton with state space (alphabet) $A$ and transition function $F$, the space-time diagrams represent two-way infinite orbits. These orbits can be expressed as:
\begin{equation*}
    sp(F)=\{(s^{i})_{i\in\mathbb{Z}}\in (A^{\mathbb{Z}})^{\mathbb{Z}}|\forall i,F(s^{i})=s^{i+1}\}.
\end{equation*}
We can visualize these orbits as colored square lattices, where each row corresponds to points in the orbit and time progresses downwards. \\
Consider a cellular automaton $(A^{\mathbb{Z}},F)$ with radius $r$. In CA, the trace \cite{guillon2008automates} refers to the set of infinite sequences, or words, representing the sequence of states assumed by a specific cells over time see Figure \ref{trace}. The n-trace of $F$ is the one-sided subshift consisting of columns of width $n$ that appear in the space-time diagrams of $F$. It is defined as follow:

\begin{definition}\cite{pierre2010asymptotic}
    The trace application of some totally disconnected discrete dynamical systems
    $(\Sigma \subset A^{\mathbb{Z}}, F)$ in cells $[i,k[$, where $i, k\in \mathbb{Z}$, $i < k$  and $(k-i)+1=n$ is:
    \begin{eqnarray*}
        \mathbb{T}_{F}^{[i,k[}: \Sigma & \longrightarrow & (A^{[i,k[})^{\mathbb{N}} \\
       s  & \longrightarrow& (F^{t}(s)_{[i,k[})_{t \in\mathbb{N}}
   \end{eqnarray*}

    \begin{figure}[H]
        \centering
        \includegraphics[width=15cm,height=8cm]{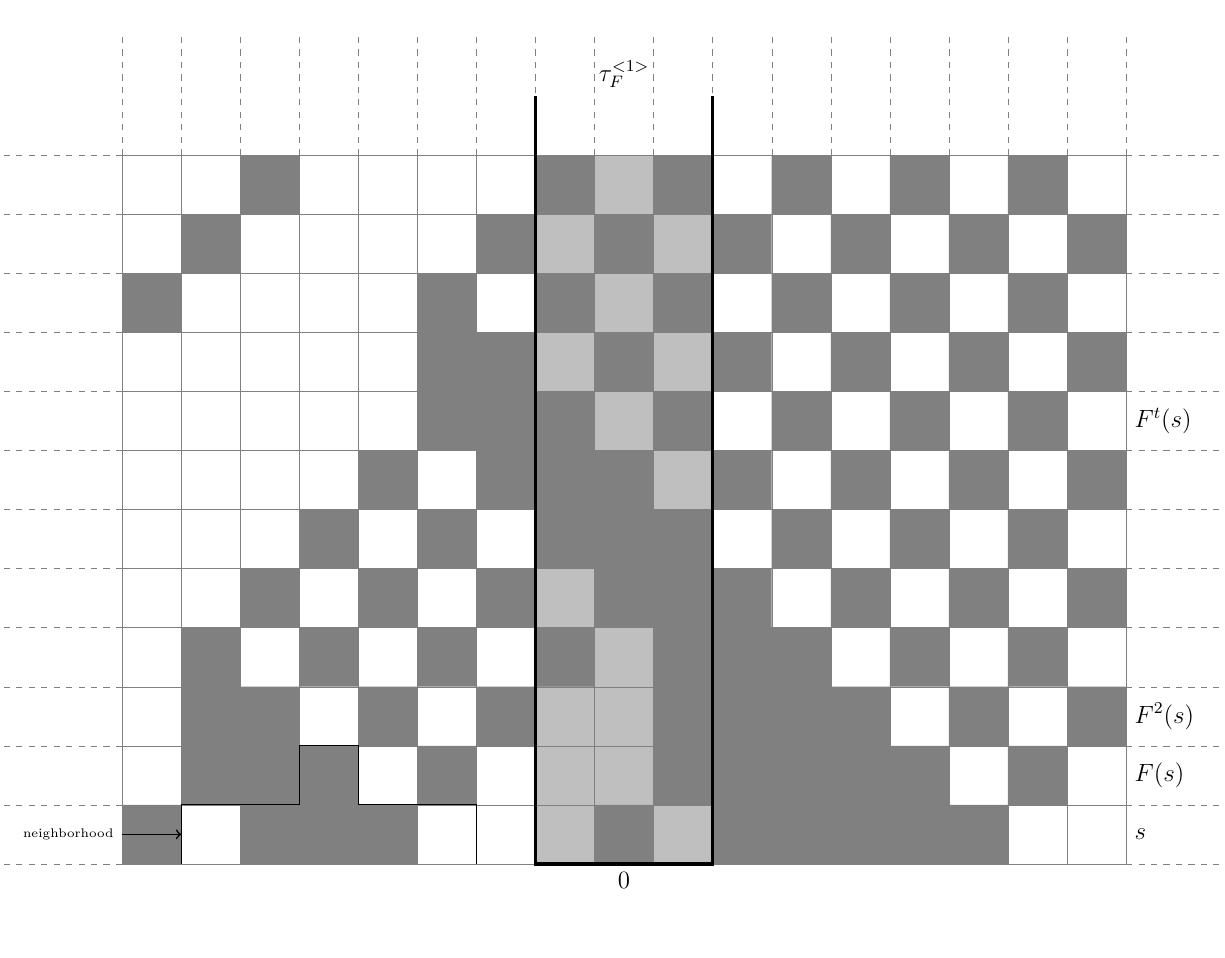}
        \caption{Space-time diagram and trace over segment $[-1,1]$ }
        \label{trace}
    \end{figure}
\end{definition}
The image $\tau_{F}^{[i,k[}=\mathbb{T}_{F}^{[i,k[}(\Sigma)$
is a onesided subshift over alphabet $A^{[i,k[}$.

	\section{Problems statement}
 In this section, we introduce two problems in control theory: controllability and a specific case of it, regional controllability.
 \subsection{Regional Controllability of Cellular automata}
Let us consider a Boolean cellular automaton of radius $r$ defined on a lattice $\Omega$, a region to be controlled $\omega=\{c_{1},\dots,c_{n}\}$ of $n$ cells. With the aim of guiding a dynamical system from an initial state to a selected desirable state in a finite amount of time $T$, we will address the problem of regional controllability via boundary actions on the target region (see Figure \ref{reg-control-CA}).
This issue can be described as follows: \\
Let us consider a target region $\omega = \{c_{1}, \dots, c_{n}\}$ and a one-dimensional CA. The question of regional controllability  is whether it is possible to start from any initial configuration  $s^{0}_{\omega}\in A^{\omega}$ such that $s^{0}_{\omega}= \{s^{0}(1),\dots, s^{0}(n)\}$  and reach any desired configuration $s^{d}_{\omega}\in A^{\omega}$ in the target region at time $T$ by acting on the boundaries  $\{c_{-1}c_{0}, c_{n+1}c_{n+2}\}$ of the controlled region $\omega$ in the time interval $[0, T-1]$ such as: 
\begin{equation*}
    s^{T}(i)=s^{d}(i) \quad \forall i=1,\dots,n 
\end{equation*}

\begin{figure}[H]
\centering
\includegraphics[width=13cm,height=7cm]{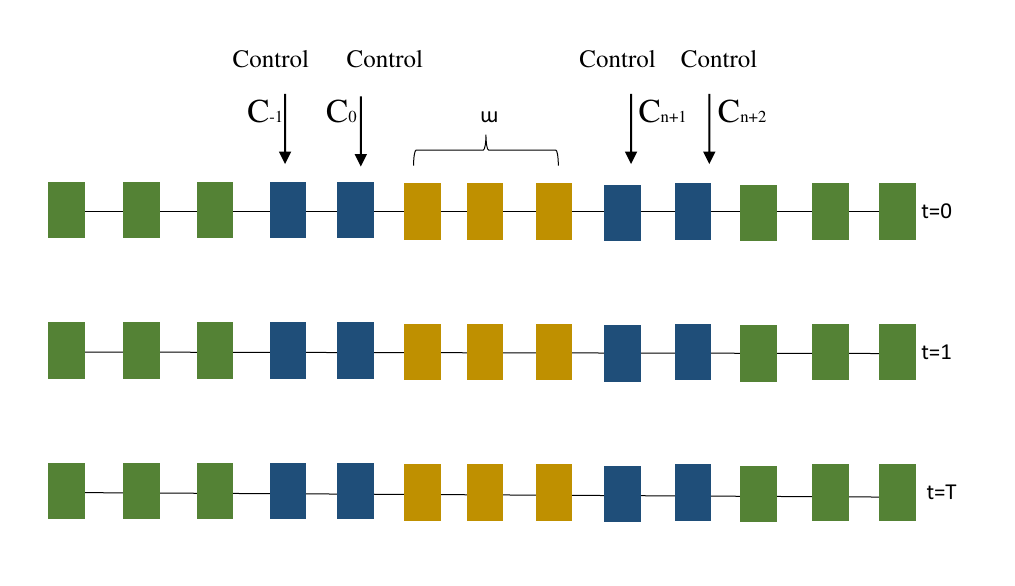}
\caption{ Regional controllability  of one dimensional cellular automaton }
\label{reg-control-CA}
\end{figure}

\begin{notation}
\begin{enumerate}
    \item    $\overline{\omega} = \{c_{-1},c_{0}, c_{1}, \dots, c_{n}, c_{n+1},c_{n+2}\} = \omega \cup  \{c_{-1},c_{0}, c_{n+1}c_{n+2}\}$ where $\{c_{-1},c_{0}, c_{n+1}c_{n+2}\}$ are the boundary cells of $\omega$ where we apply control.
\item $A^{\omega}$ denotes the set words in the controlled region $\omega$. 
\item $A^{\overline{\omega}}$ denotes the set of words in the region $\overline{\omega}$.
\end{enumerate}
\end{notation}

 \subsection{ Controllability of Cellular automata}
We shall deal with the problem of controllability of one dimensional  cellular automata  its aim that is leading a dynamical system from an initial state  to a chosen desired state in a finite time $T$, see Figure \ref{control-CA}.\\

This problem can be defined in this way, let us consider:
\begin{itemize}
    \item a Boolean one-dimensional CA. 
    \item a discrete time horizon $I=[0,T]$.
\end{itemize}

The question is whether it is possible to steer the initial configuration $s^{0}$ in the whole domain (i.e $s^{0}\in A^{\mathbb{Z}}$) to a desired configuration $s^{d}$ such that $s^{d}\in A^{\mathbb{Z}}$ at time $T$ through actions applied on some cells, referred to as $p$ cells (depicted in black). Refer to the following figure for illustration.

\begin{figure}[H]
\centering
\includegraphics[width=13cm,height=7cm]{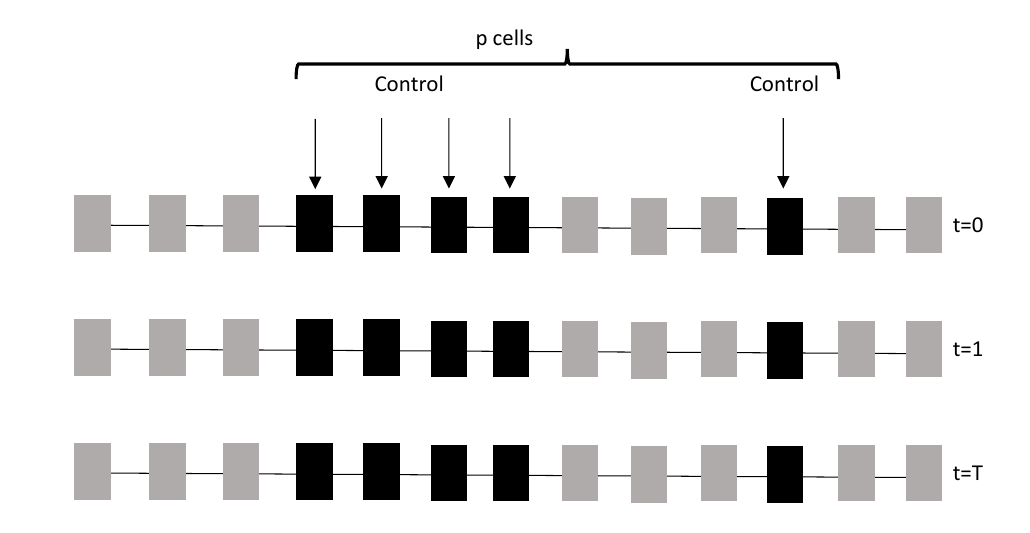}
\caption{ Controllability  of one dimensional cellular automaton }
\label{control-CA}
\end{figure}

\begin{example}
A cellular automaton $(A^{\mathbb{Z}}, F)$ is nilpotent if there exists $q\in A$ and $T\in \mathbb{N}$
 such that for every $s\in A^{Z}$  we have that $F^{T}(s) = \dots qqq \dots$. Then there exists no sequence of controls which allow to start from any configuration and to reach any desired configuration at time $T$ in the target region  and it follows that the CA is not controllable thus not regional controllable.
\end{example}

\begin{definition}
    A point $s \in  A^{\mathbb{Z}}$ of a  dynamical system  $( A^{\mathbb{Z}}, F )$ is equicontinuous, if
    \begin{equation*}
        \forall \varepsilon >0, \exists \delta >0, \forall z \in B_{\delta}(s), \forall n \ge 0, d(F^{n}(z),F^{n}(s))< \varepsilon
        \end{equation*}
  such that $B_{\delta}(s)= \{z \in A^{\mathbb{Z}} | d(s, z) < \delta\}$   When all points of $( A^{\mathbb{Z}}, F )$ are equicontinuity points,  $( A^{\mathbb{Z}},F)$ is said to be equicontinuous.
\end{definition}
\begin{example}The identity rule $204$ (Wolfram rule)\\
$(A^{\mathbb{Z}}, I)$, with $A=\{0,1\}$ where $I(s) = s$, is an equicontinuous CA and it is not controllable thus not regional controllable. 
\end{example}

\section{Characterizing regional controllability for Boolean deterministic CA  using notions of  symbolic dynamics }

\subsection{Transition graph approach and regional controllability problem}\label{sect:transG}

In this part, we recall the transition matrix described in \cite{dridi2019graph}. The evolution of controlled CA for one step can be represented by a directed graph  where the vertices represent the finite  configurations (words) in $A^{n}$ and the arcs  represent the transition from a configuration to another one in one step \emph{i.e.} by applying the global transition function $F$. Consider $(A^{\mathbb{Z}}, F)$ be a cellular automaton with radius $r=2$  where the controlled region $\omega$ is of size $|\omega|=n$ and controls are applied on its two boundary cells  $ \{(c_{-1}c_{0},c_{n+1}c_{n+2})\} $.\\
   We define the transition graph $G_{n}(F)=(V_{n},AR)$ as follows where the vertices $V_{n}$ corresponds to each possible configuration of the region $\omega$ and $AR$ is the set of arcs. Let $v_1$ and $v_2$ be two vertices in $V_{n}$,
   there is an arc from the vertex $v_1$ to the vertex $v_2$ if there exists a boundary control $U_{b}=(x,y)\in
\{(00,00) ; (00,01) \dots  (11,10); (11,11)\}$ 
such that $v_2$ is equal to 
$ F_{|_{A^{\overline\omega}}}(x\cdot
{v_{1}}\cdot y) $ ( $F_{|_{A^{\overline\omega}}}$ is the restriction  of CA to $A^{\overline\omega}$),
where  $v_{1}$ denotes the configuration in the controlled region at time $t$ and $v_{2}$ denotes the configuration in the controlled region at time $t+1$.\\
We denote by $\mathcal{C}$ the transition matrix which is the associate adjacency matrix of the graph $G_{n}$. 
The transition matrix is built as a Boolean matrix of size $ 2^{|\omega|} \times 2^{|\omega|}$. 
There is a $1$ at position $(i,j)$, the $i{\mbox{th}}$ row and $j{\mbox{th}}$ column,  if there is an arc between vertices $i$ and $j$ for all
$i,j$ in $[0:2^{|\omega|}-1]$. Otherwise it stays at $0$.\\
 For each 
vertex $v$, we compute the 16 configurations (represented by 
$w_1, w_2,\dots w_{16}$) obtained by the application of the global transition 
function $F_{|_{A^{\overline\omega}}}$ to the possible configurations 
obtained by the concatenation of the controls 
($(00, 00) ; (00,01) ;\dots ; (11, 11)$) on the extremities of $v$. Then we add  an arc from $v$ to each of the configurations obtained  $w_i$.\\
The following theorem has been derived using the constructed transition matrix $\mathcal{C}$.

\begin{theorem}\cite{dridi2019graph}
    A CA $(A^{\mathbb{Z}},F)$  is regionally controllable for a given rule iff the transition graph
$G_{n}(F)$ associated to the rule has only one Strongly Connected Component (SCC).
\end{theorem}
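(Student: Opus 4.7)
The proof plan rests on one structural observation that is essentially built into the definition of $G_{n}(F)$: there is an arc from $v_{1}$ to $v_{2}$ in $G_{n}(F)$ if and only if some admissible boundary control $U_{b}=(x,y)$ satisfies $v_{2}=F_{|_{A^{\overline{\omega}}}}(x\cdot v_{1}\cdot y)$. Consequently, a directed walk $v_{1}\to z_{1}\to z_{2}\to\cdots\to z_{T-1}\to v_{2}$ in $G_{n}(F)$ is in bijective correspondence with a length-$T$ sequence of boundary controls that drives the content of $\omega$ from $v_{1}$ at time $0$ to $v_{2}$ at time $T$ through the intermediate configurations $z_{1},\dots,z_{T-1}$. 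Everything else in the proof is a translation back and forth through this bijection.

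For the sufficiency direction, I would assume that $G_{n}(F)$ consists of a single SCC, i.e.\ is strongly connected. Given any $s^{0}_{\omega},s^{d}_{\omega}\in A^{\omega}$, strong connectivity supplies a directed path from the vertex labeled $s^{0}_{\omega}$ to the vertex labeled $s^{d}_{\omega}$. Lifting this path through the bijection above yields a finite horizon $T$ and a control sequence $(U_{b}^{0},\dots,U_{b}^{T-1})$ that steers $\omega$ from $s^{0}_{\omega}$ to $s^{d}_{\omega}$ in exactly $T$ steps, which is precisely the regional controllability requirement.

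For necessity I would argue by contraposition. If $G_{n}(F)$ has strictly more than one SCC then, since SCCs partition $V_{n}$, there exist vertices $v_{1},v_{2}$ with no directed walk from $v_{1}$ to $v_{2}$ (take $v_{1}$ in an SCC that, in the condensation DAG, does not reach the SCC containing $v_{2}$). Setting $s^{0}_{\omega}=v_{1}$ and $s^{d}_{\omega}=v_{2}$, any hypothetical control sequence $(x_{t},y_{t})_{t=0}^{T-1}$ that steers $\omega$ from $v_{1}$ to $v_{2}$ would produce intermediate states $v_{1}=z_{0},z_{1},\dots,z_{T}=v_{2}$ with $z_{t+1}=F_{|_{A^{\overline{\omega}}}}(x_{t}\cdot z_{t}\cdot y_{t})$, and this is a directed walk from $v_{1}$ to $v_{2}$ in $G_{n}(F)$, contradicting the choice of $v_{1},v_{2}$. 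Hence the CA cannot be regionally controllable.

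The main thing to be careful about is not the logical content, which is routine, but the reading of the hypothesis and the handling of the time horizon. I would make explicit that "only one SCC" is meant in the strong sense that the whole graph $G_{n}(F)$ is strongly connected (so that every vertex of $V_{n}$ lies in it), since otherwise a vertex outside the distinguished component would immediately obstruct reachability in one direction or the other. I would also flag that the problem statement imposes no fixed $T$ in advance: the horizon is allowed to depend on $s^{0}_{\omega}$ and $s^{d}_{\omega}$, which is exactly what makes the correspondence with arbitrary-length walks in $G_{n}(F)$ clean. Once both points are spelled out, the theorem reduces to a direct reading of the bijection between walks in $G_{n}(F)$ and admissible controlled trajectories.
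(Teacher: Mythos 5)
Your proof is correct, and it follows the intended argument: the paper states this theorem as an imported result from \cite{dridi2019graph} without reproducing a proof, but the correspondence you set up between directed walks in $G_{n}(F)$ and admissible controlled trajectories is exactly the equivalence (regional controllability $\iff$ strong connectivity of $G_{n}(F)$) that the paper relies on in all of its subsequent equivalence chains. Your two caveats --- that ``only one SCC'' must mean the whole vertex set $A^{n}$ forms a single strongly connected component, and that the horizon $T$ is allowed to depend on the pair $(s^{0}_{\omega},s^{d}_{\omega})$ --- are both appropriate and consistent with how the paper uses the result.
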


In the following, we will consider configurations of $(A^{n})^{\mathbb{Z}}$ where $n \ge 0$.
\subsection{Necessary and Sufficient Condition: Chain transitive and Chain mixing}

\begin{notation}
    We introduce the notation $a \rightsquigarrow b$. This means that $b$ is reachable from
$a$ i.e. there is a directed path starting from $a$ to $b$. In other words, there exist vertices
$v_{1}, v_{2}, \dots , v_{i}$ such that $(a, v_{1}), (v_{1}, v_{2}), \dots , (v_{i-1}, v_{i}), (v_{i}, b)$ are arcs in $AR$.
\end{notation}

\begin{definition}
     A finite configuration  $u \in A^{n}$ is reachable from  another finite configuration  $w \in A^{n}$ in one step if there exists a control $(x^{i},y^{i}) \in (A^{r})^{2}$ such that $ F(x^{i}.w.y^{i})=u $. Where $r$ is the radius of the CA. 
\end{definition}

\begin{definition}
    A finite configuration  $u \in A^{n}$ is reachable from  another finite configuration  $w \in A^{n}$ at time $T$ if there exists a sequence of control $(x^{i},y^{i}) \in (A^{r})^{2},\quad \forall i=1,\dots,T-1$ such that: 
    \begin{equation*}
        w \rightsquigarrow w^{1} \rightsquigarrow  \dots \rightsquigarrow u
    \end{equation*}
   
\end{definition}

\begin{example}
   Consider Wolfram's Rule 90, a cellular automaton whose evolution is entirely determined by a rule table. This table maps the next state of a cell based on all possible combinations of its three inputs: $s_{-1}, s_{0}$,  and $s_{+1}$. The next state is calculated as the sum modulo 2 of the states of the cells to the left ($s_{-1}$) and right ($s_{+1}$), expressed as $s_{-1}\oplus s_{+1}$.
   For instance, with $n = 6$ (see Figure \ref{fig-example}), 
   \begin{figure}[H]
       \centering
       \includegraphics[width=10cm,height=2.5cm]{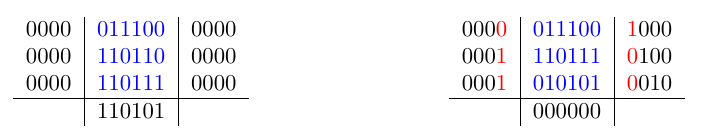}
       \caption{Evolution of Wolfram Rule $90$ Cellular Automaton on the Region $\omega=\{c_{1},\dots,c_{6}\}$ Beginning with the same initial configuration: uncontrolled on the left and controlled on the right }
       \label{fig-example}
   \end{figure}

   Assuming the initial configuration at time $0$ is $\{s^{0}_{1}, s^{0}_{2}, s^{0}_{3}, s^{0}_{4}, s^{0}_{5}, s^{0}
_{6}\} = \{011100\}$ on the region  $\omega= \{c_{1}, \dots , c_{6}\}$ and given a desired null state on $\omega$, a control $U_{b} = ((x^{0},y^{0}), (x^{1},y^{1}), (x^{2},y^{2}))$ where $(x^{0},y^{0})=(0,1), (x^{1},y^{1})=(1,0)$ and $(x^{2},y^{2})=(1,0)$ can be applied on cells $c_{0}, c_{7}$, such that the final CA configuration on $\omega$ obtained at time $T = 3$ from the evolution of rule $90$, is $\{s^{3}_{1}, s^{3}_{2}, s^{3}_{3}, s^{3}_{4}, s^{3}_{5}, s^{3}_{6}\} = \{000000\}$.
\end{example}

\begin{notation}
\begin{enumerate}
    \item  The set of vertices is $V_{n} = A^{n}$.
    \item The set of edges $AR$: For every $v \in  V_{n}$ and $(x^{i},  y^{i}) \in (A^{r})^{2}$  there is a labeled edge from $v$ to $u$ ( $F(x^{i}.v.y^{i}) \xrightarrow{}{} u)$.
    \item The graph $G_{n}(F)$ defines an SFT $S_{n}(F)\subseteq (A^{n})^{\mathbb{N}}$, where the nodes are the finite configurations (words). The points of this SFT are the pseudo orbits of $F$.
    \item  $(v_{1},v_{2})$ is forbidden in this SFT, if there is no edge from   $v_{1}$ to $v_{2}$.
\end{enumerate}
\end{notation}

\begin{definition}
    The CA $(A^{\mathbb{Z}},F)$ is regional controllable on a region $\omega$ if  for every $ s^{0}_{\omega},s^{d}_{\omega} \in A^{\omega}$ \quad
    $
    \exists s^{0}_{\omega},\dots,s^{T}_{\omega}=s^{d}_{\omega}$
  
    such that: 
    \begin{equation*}
s^{0}_{\omega} \rightsquigarrow  s^{d}_{\omega} \quad \text{at time T}
    \end{equation*}
Where  $s^{d}_{\omega}$ is the desired configuration.

\end{definition}

In the following, we use the definition of chain transitive and chain mixing in order to prove regional controllability. 

\begin{proposition}
     A CA $(A^{\mathbb{Z}},F)$ is chain-transitive (mixing)   if and only if CA is regional controllable for every $n$. 
\end{proposition}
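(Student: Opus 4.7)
The plan is to translate between the ultrametric structure on $A^{\mathbb{Z}}$, where $d(s,e)<2^{-m}$ means $s$ and $e$ agree on the window $[-m,m]$, and the $r$-locality of $F$. The key observation is that an $\varepsilon$-chain with $\varepsilon=2^{-m}$ is nothing but a sequence $(x_i)$ such that $F(x_i)$ and $x_{i+1}$ coincide on $[-m,m]$, and this encodes exactly a regional steering sequence on any region that contains $[-m,m]$ together with a buffer of $r$ cells on each side. Since $F$ commutes with $\sigma$, I may assume throughout that the region under consideration is centred at the origin.

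To establish regional controllable for every $n$ $\Rightarrow$ chain transitive, I would pick $x,y\in A^{\mathbb{Z}}$ and $\varepsilon>0$, let $m$ satisfy $2^{-m}<\varepsilon$, and take $\omega=[-M,M]$ with $M\ge m+r$. Regional controllability on $\omega$ applied to $x|_\omega$ and $y|_\omega$ yields a regional trajectory $s^0,\ldots,s^T\in A^{\omega}$ with controls $(u_i,v_i)\in (A^r)^2$ satisfying $F(u_i\cdot s^i\cdot v_i)|_\omega=s^{i+1}$. Setting $\tilde s^0=x$, $\tilde s^T=y$, and in between letting $\tilde s^i$ carry $s^i$ on $\omega$, $(u_i,v_i)$ on the $2r$ boundary cells of $\omega$, and arbitrary values elsewhere, one checks by locality that $F(\tilde s^i)|_\omega=s^{i+1}$ for every intermediate $i$; at the two endpoints the $r$-cell buffer ensures that $F(\tilde s^i)$ still agrees with $\tilde s^{i+1}$ on $[-m,m]$, even though the actual boundary values of $x$ or $y$ generally differ from the chosen controls. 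The resulting sequence $(\tilde s^i)_{i=0}^T$ is an $\varepsilon$-chain from $x$ to $y$.

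For the converse, I would fix a region which I translate to $\omega=[-M,M]$ together with targets $s^0_\omega,s^d_\omega\in A^{\omega}$, extend them arbitrarily to $x,y\in A^{\mathbb{Z}}$, and invoke chain transitivity with $\varepsilon=2^{-M}$. The resulting chain $x=x_0,\ldots,x_T=y$ satisfies $F(x_i)|_\omega=x_{i+1}|_\omega$; setting $s^i:=x_i|_\omega$ and reading the controls $(u_i,v_i)$ directly off of the $2r$ boundary cells of $\omega$ inside $x_i$ gives, by locality of $F$, a regional steering sequence from $s^0_\omega$ to $s^d_\omega$. The chain mixing version proceeds by the same argument once regional controllability is understood to produce trajectories of every sufficiently large length $T$, matching the quantifier $\forall n\ge N$ in the definition of chain mixing. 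The main point requiring care in both directions is the treatment of the boundary of $\omega$: forcing $M\ge m+r$ is essential so that the radius-$r$ locality of $F$ absorbs the mismatch between the freely chosen boundary controls and the actual boundary values of the ambient configurations $x$ and $y$.
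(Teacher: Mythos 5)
Your proposal is correct and follows essentially the same route as the paper: both arguments translate an $\varepsilon$-chain (with $\varepsilon=2^{-m}$, i.e.\ agreement of $F(x_i)$ and $x_{i+1}$ on a central window) into a regional steering sequence read off a finite window and back, the paper phrasing the middle step as strong connectedness of the transition graph $G_k(F)$. The only difference is one of care rather than substance: you make explicit the $r$-cell buffer $M\ge m+r$ and the reading of controls off the boundary cells, which the paper's chain of equivalences leaves implicit.
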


\begin{proof}
    \begin{eqnarray*}
&   &  \text{ A CA} \quad  (A^{\mathbb{Z}},F) \quad  \text{is chain-transitive (mixing)  }       \\
&  \iff&  \forall \varepsilon >0 \quad \forall s,s^{d}\in A^{\mathbb{Z}}  \quad  \exists s=s^{0}, \dots, s^{T}=s^{d} \quad d(F(s^{i}),s^{i+1})< \varepsilon \\
  & \iff & \forall k \in \mathbb{N}  \quad  \forall s,s^{d}\in A^{\mathbb{Z}} \quad  \exists s=s^{0},\dots, s^{T}=s^{d}, \quad F(s^{i})_{[-k,k]}= (s^{i+1})_{[-k,k]}\\
  & \iff & \forall k \in \mathbb{N},\quad  G_{k}(F) \quad  \text{is strongly connected } \\
  &\iff & \forall k \in \mathbb{N},\quad  \forall s^{0}_{\omega},s^{d}_{\omega}\in A^{\omega}: \exists s^{0}_{\omega},\dots,s^{T}_{\omega}=s^{d}_{\omega},  \quad s^{0}_{\omega}  \rightsquigarrow  s^{d}_{\omega} \quad \text{at time T}   \\
  & \iff & \forall k \in \mathbb{N}, \quad \text{A CA is regional controllable. }
    \end{eqnarray*}
\end{proof}

\begin{remarque}
    If $(A^{\mathbb{Z}},F)$  is chain-mixing, then $G_{k}(F)$ is not only strongly connected but also possesses the property that there exists an $N \in\mathbb{N}$ such that for every $v,v'\in V_{k}$  and for every $n$ with $n\ge N$, a path of length $n$ exists from $v$ to $v'$.
\end{remarque}

\subsection{Necessary and Sufficient Condition: Transitive SFT and Mixing SFT}
\begin{definition}
    A subshift of finite type (SFT) is called transitive if its associated graph  is strongly connected: there is a sequence of edges from any one vertex to any other vertex. It is precisely transitive subshifts of finite type which correspond to dynamical systems with orbits that are dense. 
\end{definition}

\begin{proposition}
     A CA $(A^{\mathbb{Z}}, F)$ is is regional controllable for every $n$   if and only if $S_{n}(F)$ is transitive for every $n$. 
\end{proposition}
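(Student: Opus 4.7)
The plan is to read both sides of the equivalence off objects already defined in the paper. Recall that $S_n(F) \subseteq (A^n)^{\mathbb{N}}$ is the vertex-shift SFT built from the transition graph $G_n(F)$: its admissible points are exactly the sequences $(v^t)_{t \in \mathbb{N}}$ such that $(v^t, v^{t+1})$ is an edge of $G_n(F)$ for every $t$, i.e.\ sequences for which there is a boundary control $(x^t, y^t) \in (A^r)^2$ with $F(x^t \cdot v^t \cdot y^t) = v^{t+1}$. By the definition of transitive SFT recalled just before the proposition, $S_n(F)$ is transitive if and only if its underlying graph $G_n(F)$ is strongly connected.

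Granted this reformulation, the proposition reduces to a chain of equivalences that has essentially already been established in the proof of the preceding proposition on chain-transitivity. There we saw that $F$ is regional controllable for every $n$ if and only if $G_n(F)$ is strongly connected for every $n$: indeed, strong connectedness provides, for every pair $s^0_\omega, s^d_\omega \in A^\omega$, a directed path in $G_n(F)$ from $s^0_\omega$ to $s^d_\omega$, and by the construction of the arcs this path is precisely a finite sequence of boundary controls $((x^i, y^i))_{i=0}^{T-1}$ realizing $s^0_\omega \rightsquigarrow s^d_\omega$ at some time $T$; the converse direction is immediate, since a sequence of controls driving $s^0_\omega$ to $s^d_\omega$ produces by definition a directed path between the corresponding vertices.

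Assembling the two steps gives the proof in two lines: applying the graph-theoretic characterization of transitive SFT, \textbf{$S_n(F)$ transitive for every $n$} is the same as \textbf{$G_n(F)$ strongly connected for every $n$}, which by the regional controllability lemma above is equivalent to \textbf{$F$ regional controllable for every $n$}. The only point that deserves some vigilance, but is not a real obstacle, is to make sure the time horizon $T$ in the definition of regional controllability is allowed to depend on the chosen pair $(s^0_\omega, s^d_\omega)$; this is indeed how the definition was phrased, so the path length coming from strong connectedness directly provides an admissible $T$. The substance of the argument is thus entirely absorbed into the definition of $S_n(F)$ as the vertex-shift of $G_n(F)$ and into the reachability equivalence already used for chain-transitivity.
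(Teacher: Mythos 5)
Your proof is correct and follows essentially the same route as the paper: both arguments pass through the intermediate equivalence that regional controllability for every $n$ holds if and only if $G_n(F)$ is strongly connected for every $n$, and then invoke the definition of a transitive SFT as one whose associated graph is strongly connected. Your version merely spells out in more detail the correspondence between directed paths in $G_n(F)$ and sequences of boundary controls, which the paper leaves implicit in its chain of equivalences.
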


\begin{proof}
    \begin{eqnarray*}
    &  &  \text{ A CA} \quad  (A^{\mathbb{Z}}, F) \quad  \text{is regional controllable for every $n$.   }       \\
      &\iff & \forall k \in \mathbb{N},\quad  \forall s^{0}_{\omega},s^{d}_{\omega}\in A^{\omega}: \exists s^{0}_{\omega},\dots,s^{T}_{\omega}=s^{d}_{\omega},  \quad s^{0}_{\omega}  \rightsquigarrow  s^{d}_{\omega} \quad \text{at time T}   \\
        & \iff & \forall k \in \mathbb{N},\quad  G_{k}(F) \quad  \text{is strongly connected } \\
  & \iff & \forall k \in \mathbb{N},\quad S_{k}(F) \quad \text{is transitive.}
    \end{eqnarray*}
\end{proof}

\begin{definition}
    A subshift of finite type (SFT) is mixing if for any two finite blocks $B$ and $B'$ of symbols (from the alphabet $A$), there exists an integer $N$ such that for all $n\ge N$, there is a sequence in the SFT where block $B$ is followed by block $B'$ after exactly $n$ steps.
\end{definition}

\begin{proposition}
     A CA $(A^{\mathbb{Z}}, F)$ is  regional controllable for every $n$   if and only if $S_{n}(F)$ is mixing for every $n$. 
\end{proposition}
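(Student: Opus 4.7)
My plan is to mirror the structure of the preceding proposition (the transitive SFT characterization), but upgrade every ``strongly connected'' condition to the corresponding ``long-path'' condition that witnesses mixing. Concretely, I will thread a chain of equivalences that passes through the chain-mixing property of $(A^{\mathbb{Z}},F)$, the refined structural property of the transition graph $G_{n}(F)$ noted in the remark after the chain-transitive/chain-mixing proposition, and finally the definition of a mixing SFT applied to $S_{n}(F)$.

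For the forward direction, I would start from regional controllability for every $n$. By the chain-transitive/chain-mixing proposition already proved, this is equivalent to $(A^{\mathbb{Z}},F)$ being chain-mixing. The remark immediately after that proposition records exactly what I need: for every $k\in\mathbb{N}$ and every pair of vertices $v,v'\in V_{k}=A^{k}$, there exists $N\in\mathbb{N}$ such that for all $n\ge N$ a directed path of length $n$ from $v$ to $v'$ exists in $G_{k}(F)$. Since the points of $S_{k}(F)$ are, by the notation block preceding the previous propositions, precisely the pseudo-orbits of $F$ read as sequences in $(A^{k})^{\mathbb{N}}$, a path $v=v_{0}\to v_{1}\to\cdots\to v_{n}=v'$ in $G_{k}(F)$ is the same object as a word $v_{0}v_{1}\cdots v_{n}\in\mathcal{L}(S_{k}(F))$. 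Hence for any two single letters $v,v'$ of the alphabet $A^{k}$ one finds, for all sufficiently large $n$, a word $w$ of length $n$ in $\mathcal{L}_{n}(S_{k}(F))$ such that $vwv'\in\mathcal{L}(S_{k}(F))$. The only nontrivial extension is from single letters to arbitrary blocks $B=v_{0}\cdots v_{p}$ and $B'=v'_{0}\cdots v'_{q}$: applying the single-letter statement to the pair $(v_{p},v'_{0})$ and prepending/appending $B$ and $B'$ yields the mixing condition for $S_{k}(F)$.

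For the reverse direction, assuming $S_{n}(F)$ is mixing for every $n$, I fix $k$ and apply the mixing property to every pair of length-one blocks $v,v'\in A^{k}$. This yields, for each such pair, an integer $N_{v,v'}$ and, for every $n\ge N_{v,v'}$, a bridging word in $S_{k}(F)$, which is the same as a directed path of length $n+1$ from $v$ to $v'$ in $G_{k}(F)$. In particular $G_{k}(F)$ is strongly connected, so by the previous proposition the CA is regional controllable for every $n$; the uniform lower bound $N=\max_{v,v'}N_{v,v'}$ (finite because $A^{k}$ is finite) even recovers the stronger chain-mixing conclusion, which keeps the equivalence tight with the chain-mixing proposition.

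The main obstacle I anticipate is purely linguistic rather than mathematical: carefully aligning the alphabet of $S_{n}(F)$ (which is $A^{n}$, i.e.\ a letter of the SFT is a full finite configuration on $\omega$) with the $k$-trace viewpoint used when talking about chain-mixing on $A^{\mathbb{Z}}$. One has to verify that the radius of the rule does not inflate the bound, i.e.\ that controls applied at the two boundary cells of $\omega$ are exactly what realize the edges of $G_{n}(F)$; this was already taken care of in Section~\ref{sect:transG}, so I would just cite it rather than redo the bookkeeping. Beyond that, the passage from single-letter mixing to block mixing is the standard prepending/appending trick from symbolic dynamics and should not require additional structural hypotheses on $F$.
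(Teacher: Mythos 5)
Your argument follows essentially the same route as the paper's proof: the identical chain of equivalences through strong connectivity of the transition graphs $G_{k}(F)$ and the existence of paths of every sufficiently large length between any two vertices, with the forward direction routed through the chain-mixing proposition and its remark, and with the single-letter-to-block extension spelled out more explicitly than in the paper. The one genuinely delicate step --- that strong connectivity of $G_{k}(F)$ (equivalently, regional controllability) already forces paths of \emph{every} sufficiently large length, i.e.\ aperiodicity/primitivity rather than mere irreducibility --- is asserted without proof both in the paper's chain of equivalences and in the remark you cite, so your proposal inherits exactly the same level of justification as the paper at that point.
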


\begin{proof}
    \begin{eqnarray*}
    &  &  \text{ A CA}\quad  (A^{\mathbb{Z}}, F) \quad  \text{is regional controllable.   }       \\
      &\iff & \forall k \in \mathbb{N},\quad  \forall s^{0}_{\omega},s^{d}_{\omega}\in A^{\omega}: \exists s^{0}_{\omega},\dots,s^{T}_{\omega}=s^{d}_{\omega},  \quad s^{0}_{\omega} \rightsquigarrow  s^{d}_{\omega} \quad \text{at time T}   \\
        & \iff & \forall k \in \mathbb{N},\quad  G_{k}(F) \quad  \text{is strongly connected } \\
         &\iff & \forall k \in \mathbb{N},\quad  \forall v_{0},v_{d}\in V_{n} \quad \exists N \in \mathbb{N} \quad  \forall n\ge N  \quad \exists v_{0},\dots,v_{n}=v_{d},  \quad \text{a path of length n}   \\
  & \iff & \forall k \in \mathbb{N},\quad S_{k}(F) \quad \text{is mixing.}
    \end{eqnarray*}
\end{proof}

Let $\mathcal{M}$ be the adjacency matrix with entries $0$ or $1$ related to the SFT. We call the matrix primitive if there exists $M >0$ such that $\mathcal{M}^{M}>0$. 
\begin{proposition}\label{proposmixing}\cite{brin2002introduction}
    An SFT induced by a matrix $\mathcal{M}$ with non zero rows and columns is mixing if and only if $\mathcal{M}$ is primitive. 
\end{proposition}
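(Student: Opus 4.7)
The plan is to reduce both implications of the equivalence to the single statement that $\mathcal{M}^{n} > 0$ (entrywise) for all sufficiently large $n$. The bridge between the symbolic and the matrix formulation is the classical fact that the entry $\mathcal{M}^{n}_{i,j}$ counts the admissible paths of length $n$ from vertex $i$ to vertex $j$ in the graph underlying the SFT, hence equals the number of admissible words of length $n+1$ beginning with symbol $i$ and ending with symbol $j$.

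For the direction mixing $\Rightarrow$ primitive, I would apply the definition of mixing to every ordered pair of one-letter blocks $(i, j)$. This yields an integer $N_{i,j}$ such that for all $n \geq N_{i,j}$ there is an admissible word of length $n+1$ from $i$ to $j$, i.e.\ $\mathcal{M}^{n}_{i,j} > 0$. Taking $N = \max_{i,j} N_{i,j}$, which is finite since the alphabet (and hence the vertex set) is finite, one obtains $\mathcal{M}^{N} > 0$, so $\mathcal{M}$ is primitive.

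For the direction primitive $\Rightarrow$ mixing, the crux is to promote the single positive power $\mathcal{M}^{M}$ to positivity of every $\mathcal{M}^{n}$ with $n \geq M$. I would proceed by induction on $n$: starting from
\begin{equation*}
\mathcal{M}^{n+1}_{i,j} \;=\; \sum_{k} \mathcal{M}^{n}_{i,k}\, \mathcal{M}_{k,j},
\end{equation*}
the inductive hypothesis supplies $\mathcal{M}^{n}_{i,k} > 0$ for every $k$, while the hypothesis that $\mathcal{M}$ has no zero column produces at least one index $k$ with $\mathcal{M}_{k,j} > 0$, so the sum is strictly positive. Once $\mathcal{M}^{n} > 0$ for all $n \geq M$ is established, mixing of the SFT follows directly: given blocks $B, B'$ in the language ending and starting with symbols $b$ and $b'$, any $n$ with $n - 1 \geq M$ supplies a path of length $n - 1$ from $b$ to $b'$, producing an admissible concatenation $B w B'$.

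The main obstacle I expect is precisely this bootstrapping step in the reverse direction, since $\mathcal{M}^{M} > 0$ does not by itself imply $\mathcal{M}^{M+1} > 0$; this is exactly where the hypothesis on the rows and columns enters in an essential way (a sink or a source in the graph would otherwise destroy positivity at the next step). The second delicate point is purely notational: verifying that the translation between mixing of the symbolic subshift and the graph-theoretic path condition is exact, keeping track of the $\pm 1$ shifts between word length and path length and of the boundary symbols shared with the blocks $B$ and $B'$.
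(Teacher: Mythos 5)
The paper does not actually prove this proposition; it is imported verbatim by citation from Brin--Stuck, so there is no internal argument to compare yours against. Your proof is the standard textbook argument for this classical equivalence, and it is correct. The translation between mixing of the vertex shift and positivity of matrix powers via the path-counting identity $\mathcal{M}^{n}_{i,j} = \#\{\text{paths of length } n \text{ from } i \text{ to } j\}$ is exactly right, and your identification of the one genuinely nontrivial step --- bootstrapping $\mathcal{M}^{M}>0$ to $\mathcal{M}^{n}>0$ for all $n\ge M$ using the absence of zero columns --- is where the stated hypothesis on rows and columns earns its keep (that hypothesis also guarantees every vertex lies on a bi-infinite path, so that every symbol and every admissible finite word genuinely belongs to $\mathcal{L}(\Sigma)$, which your path-to-word dictionary silently uses). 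The only blemishes are the off-by-one discrepancies between word length and path length (a word $b\,w\,b'$ with $|w|=n$ corresponds to a path of length $n+1$ from $b$ to $b'$, not $n-1$), but since you only need positivity of all sufficiently large powers, and you explicitly flag the bookkeeping as a point to verify, this does not affect the validity of either implication.
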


Based on the result of Proposition \ref{proposmixing}, it can be concluded that:

\begin{proposition}
     A CA $(A^{\mathbb{Z}}, F)$ is  regional controllable   if and only if $\mathcal{M}$ is primitive.  
\end{proposition}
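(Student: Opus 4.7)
The plan is to chain together two results already established in the paper: the proposition characterizing regional controllability for every $n$ by mixing of the SFT $S_n(F)$, and Proposition \ref{proposmixing} characterizing mixing SFTs by primitivity of their adjacency matrix. Concretely, the statement "$\mathcal{M}$ is primitive" should be read as "for every $n$, the adjacency matrix $\mathcal{M}_n$ of the transition graph $G_n(F)$ is primitive", since $\mathcal{M}$ itself depends on the window size $n$.

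First I would invoke the previous proposition to translate regional controllability for every $n$ into the statement that $S_n(F)$ is mixing for every $n$. Then, for each fixed $n$, I would apply Proposition \ref{proposmixing} to the SFT $S_n(F)$, whose adjacency matrix is exactly $\mathcal{M}_n$, concluding that $S_n(F)$ is mixing if and only if $\mathcal{M}_n$ is primitive. Combining these two equivalences quantified over $n$ gives the desired characterization.

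The one technical point to verify, which is really the main (minor) obstacle, is the hypothesis of Proposition \ref{proposmixing}: the matrix $\mathcal{M}_n$ must have no zero rows and no zero columns. For each vertex $v \in V_n = A^n$, applying any admissible boundary control $(x,y) \in (A^r)^2$ to $v$ produces an image $F(x \cdot v \cdot y)_{|\omega} \in V_n$, so there is at least one outgoing edge from $v$, hence no zero row. Symmetrically, every vertex $u \in A^n$ can be written as $F(x \cdot v \cdot y)_{|\omega}$ for some predecessor $v$ and some control $(x,y)$, because $F$ is defined on all of $A^{\mathbb{Z}}$ and the restriction map from $A^{\overline{\omega}}$ to $A^n$ via $F$ covers $A^n$ by surjectivity of the cellular evolution on the finite alphabet (each target word admits at least one preimage word of length $n+2r$). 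Hence there is no zero column either, and the hypothesis of Proposition \ref{proposmixing} is satisfied.

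With this verification in hand, the equivalence follows immediately by stringing together the two propositions. I would present the argument as a short calculation in the same biconditional style as the preceding proofs, writing
\begin{equation*}
\text{A CA is regional controllable} \iff \forall n,\ S_n(F) \text{ is mixing} \iff \forall n,\ \mathcal{M}_n \text{ is primitive},
\end{equation*}
with a parenthetical remark verifying the no-zero-row/column hypothesis as above.
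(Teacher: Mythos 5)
Your overall strategy---chaining the earlier proposition (regional controllability for every $n$ is equivalent to $S_{n}(F)$ being mixing for every $n$) with Proposition \ref{proposmixing}---is exactly what the paper does; the paper in fact offers no more justification than the sentence ``Based on the result of Proposition \ref{proposmixing}, it can be concluded that\dots'', so your write-up is, in structure, the same argument spelled out. Your reading of $\mathcal{M}$ as the family of matrices $\mathcal{M}_{n}$, one per window size, is also a reasonable resolution of an ambiguity the paper leaves open.

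However, the step you yourself single out as the main technical point contains a genuine error. You claim that every word $u\in A^{n}$ has a predecessor in $G_{n}(F)$, i.e.\ that some word of length $n+2r$ maps onto $u$ under the local rule, ``by surjectivity of the cellular evolution on the finite alphabet.'' A cellular automaton need not be surjective, and a non-surjective CA has orphan (Garden-of-Eden) words admitting no preimage of any length: for Wolfram rule $0$, for example, no word containing a $1$ has a preimage, so the corresponding columns of $\mathcal{M}_{n}$ are identically zero and the hypothesis of Proposition \ref{proposmixing} fails. The proposition you are proving survives this, because a zero column at $u$ simultaneously makes $u$ unreachable (so the CA is not regionally controllable) and forces $\mathcal{M}_{n}^{M}$ to have a zero column for every $M$ (so $\mathcal{M}_{n}$ is not primitive): both sides of the biconditional are false. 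The correct repair is therefore a short case split---either some column of $\mathcal{M}_{n}$ is zero and both conditions fail, or no column is zero (and no row is ever zero, as you correctly argue from the fact that every control produces an image) and Proposition \ref{proposmixing} applies. As written, though, your verification of the no-zero-column hypothesis is false for general CA and should not be left in that form.
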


\begin{remarque}
  The time required to achieve regional controllability is $M$, which satisfies the condition that $\mathcal{M}^{M}>0$.
\end{remarque}

\subsection{Necessary and Sufficient Condition: Trace and Approximation Trace}

\begin{definition}\cite{guillon2008automates}
    
The $k-$approximation of a subshift $\Sigma \subset A^{\mathbb{Z}}$ is the largest subshift whose language
of order k is $\mathcal{L}_{k}(\Sigma)$:

\begin{equation*}
    \mathcal{A}_{k}(\Sigma)=\{s\in A^{\mathbb{Z}}| \forall i \in \mathbb{Z}, s_{[i,i+k[}\in \mathcal{L}_{k}(\Sigma)\}
\end{equation*}
Let us note that a configuration belongs to a k-SFT if and only if all of its factors of length $k$ are in the corresponding language. The k-approximation of  $\Sigma$ is therefore the smallest k-SFT containing $\Sigma$.
\end{definition}

\begin{proposition}
    A CA $(A^{\mathbb{Z}}, F)$ is regional controllable if and only if $\mathcal{A}_{2}(\tau _{F}^{n})$ is transitive. 
\end{proposition}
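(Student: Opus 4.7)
The plan is to identify the 2-approximation $\mathcal{A}_{2}(\tau_{F}^{n})$ with the pseudo-orbit SFT $S_{n}(F)$ introduced in Section 3.2, and then invoke the equivalence (already proved in the preceding proposition) between transitivity of $S_{n}(F)$ and regional controllability.

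First I would unpack $\mathcal{L}_{2}(\tau_{F}^{n})$. By definition a pair $(u,v)\in (A^{n})^{2}$ lies in $\mathcal{L}_{2}(\tau_{F}^{n})$ iff there is a configuration $s\in A^{\mathbb{Z}}$ and an index $i$ with $s_{[i,i+n[}=u$ and $F(s)_{[i,i+n[}=v$. Because $F$ has radius $r$, the image block depends only on the window $s_{[i-r,i+n+r[}$, so the pair is realized iff there exist boundary words $x,y\in A^{r}$ with $F(x\cdot u\cdot y)_{[r,r+n[}=v$. Conversely, any such block $x\cdot u\cdot y$ can be extended arbitrarily to a bi-infinite configuration, so this condition is also sufficient. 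In other words, the allowed 2-patterns in the trace are precisely the arcs of the transition graph $G_{n}(F)$ from Section 3.2.

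Second I would conclude that $\mathcal{A}_{2}(\tau_{F}^{n})$, which by construction is the largest subshift whose language of order $2$ equals $\mathcal{L}_{2}(\tau_{F}^{n})$, coincides with $S_{n}(F)$: both are the 2-SFT over the alphabet $A^{n}$ whose allowed transitions are exactly the arcs of $G_{n}(F)$. Transitivity of either subshift is therefore the same as strong connectedness of $G_{n}(F)$, and by the proposition in Section 3.3 this is in turn equivalent to the CA being regional controllable on a region of size $n$. Chaining these equivalences gives the claim.

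The main obstacle is really the identification in the first step: one must verify both inclusions carefully while keeping track of the radius and the alignment of the trace window. The forward direction uses that a realized trace pair witnesses a single step of $F$ applied to an extended window; the reverse uses that any finite legal block $x\cdot u\cdot y$ can be padded to an element of $A^{\mathbb{Z}}$. Neither direction requires dynamical input beyond what is already available in the preceding subsections, so once this identification is granted, the conclusion is immediate from the previously established characterization of regional controllability via transitivity of $S_{n}(F)$.
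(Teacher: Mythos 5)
Your proposal is correct and follows essentially the same route as the paper: both reduce the statement to strong connectedness of the transition graph $G_{n}(F)$ and identify $\mathcal{A}_{2}(\tau_{F}^{n})$ with the SFT it defines. The only difference is that you explicitly justify the identification $\mathcal{A}_{2}(\tau_{F}^{n})=S_{n}(F)$ (via the radius-$r$ window argument showing $\mathcal{L}_{2}(\tau_{F}^{n})$ equals the arc set of $G_{n}(F)$), a step the paper's chain of equivalences asserts without detail.
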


\begin{proof}
\begin{eqnarray*}
 &   & \text{The CA} \quad (A^{\mathbb{Z}}, F) \quad \text{is regional controllable}  \\
   & \iff & \forall  s^{0}_{\omega},s^{d}_{\omega} \in A^{\omega} \quad
 : \exists s=s^{0}_{\omega}, \dots, s^{T}_{\omega}=s^{d}_{\omega} \quad s^{0}_{\omega} \rightsquigarrow  s^{d}_{\omega} \\
   & \iff & \text{The transition graph}\quad 
G_{n}(F)  \quad  \text{associated to the rule has only one SCC}.\\
 & \iff &  \mathcal{A}_{2}(\tau _{F}^{n}) \quad \text{is transitive}. 
\end{eqnarray*}
\end{proof}
\color{black}

\begin{proposition}
     The trace is transitive  if and only if every finite  configuration in $\omega$ is reachable starting from any other finite configuration without applying control.
     \end{proposition}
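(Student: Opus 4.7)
The plan is to unfold the subshift-theoretic definition of transitivity for the trace $\tau_F^n$ and translate it into autonomous $F$-reachability in $A^\omega$. A preliminary observation is that $\mathcal{L}_1(\tau_F^n) = A^\omega$, since any $u \in A^\omega$ appears as the zeroth column of the trace of any bi-infinite extension $s \in A^{\mathbb{Z}}$ of $u$; thus the single letters of $\tau_F^n$ correspond exactly to finite configurations on $\omega$.

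For the forward direction, suppose $\tau_F^n$ is transitive and fix $u, v \in A^\omega$. Viewing $u$ and $v$ as length-one words in $\mathcal{L}(\tau_F^n)$, transitivity yields $w \in \mathcal{L}(\tau_F^n)$ with $uwv \in \mathcal{L}(\tau_F^n)$. Unfolding the definition of the trace, we obtain $s \in A^{\mathbb{Z}}$ such that $s_\omega = u$, $F^i(s)_\omega = w_i$ for $1 \le i \le |w|$, and $F^{|w|+1}(s)_\omega = v$. Since the orbit is generated by $F$ alone with no boundary modification, $v$ is reached from $u$ by purely autonomous evolution, that is, without applying any control.

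For the converse, assume every $v \in A^\omega$ is autonomously reachable from every $u \in A^\omega$. Given $U, V \in \mathcal{L}(\tau_F^n)$ with last column $U_p$ and first column $V_0$, I would apply the hypothesis to the pair $(U_p, V_0)$ to obtain a configuration $s$ with $s_\omega = U_p$ and $F^T(s)_\omega = V_0$ for some $T$. Reading off the $\omega$-columns of the trajectory $s, F(s), \ldots, F^{T-1}(s)$ produces a candidate joining word $W$; transitivity then follows once one exhibits a single bi-infinite configuration whose trace contains $UWV$ consecutively.

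The delicate step---and the main obstacle---is the splicing in the converse: the witnessing configurations for $U$, for the autonomous link, and for $V$ coincide with the prescribed data only on $\omega$ at prescribed times and are otherwise free outside $\omega$. Producing a single global configuration realizing the full concatenation requires using this off-$\omega$ freedom to glue the three pieces together. A clean implementation is to pick $s$ itself as the configuration at the link time and adjust its pre-images and future iterates so that their $\omega$-columns match $U$ before and $V$ after; the subtlety lies in verifying that this adjustment can always be carried out from the single-letter hypothesis alone, which essentially uses the fact that cylinders outside $\omega$ remain nonempty under $F$-iteration.
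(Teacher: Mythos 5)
Your forward direction is correct and is the natural unfolding of the definitions: since $\mathcal{L}_1(\tau_F^{n})=A^{\omega}$, applying trace-transitivity to two single-column words produces one global configuration whose autonomous orbit passes through $u$ and later through $v$ on $\omega$. The problem is the converse, and you have correctly located but not closed the gap. The single-letter hypothesis hands you three separate global configurations --- one realizing $U$, one realizing the link from $U_p$ to $V_0$, one realizing $V$ --- which are constrained only on $\omega$ at the prescribed times and may disagree arbitrarily outside $\omega$. Because the trace records an actual orbit of $F$, two configurations that agree on $\omega$ but differ outside it will in general have different $\omega$-columns already one step later (information propagates inward at speed $r$), so reading off the columns of $s,F(s),\dots$ gives a word $W$ that need not be appendable to $U$, and the final column of the link orbit need not continue into $V$. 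Your closing remark that the gluing ``essentially uses the fact that cylinders outside $\omega$ remain nonempty under $F$-iteration'' is not an argument: nonemptiness of cylinders does not let you prescribe a pre-image or a future orbit segment, and for non-surjective $F$ the required pre-images may simply fail to exist. As written, the converse is unproved.

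For comparison, the paper's own proof does not confront this obstacle because it never works with genuine orbits: it immediately replaces transitivity of the trace by strong connectivity of the transition graph $G_{n}(F)$, i.e.\ by a pseudo-orbit condition in which each step may choose a fresh global configuration agreeing with the previous image only on the window, and there concatenation of edges is automatic. But that condition is transitivity of the approximation $\mathcal{A}_{2}(\tau_{F}^{n})$ (the subject of the preceding proposition), which is in general strictly weaker than transitivity of $\tau_{F}^{n}$ itself, so the paper's first ``iff'' is exactly the step your splicing difficulty calls into question. In short: you have isolated the genuinely hard implication; neither your proposal nor the paper's chain of equivalences establishes it, and an honest proof must either supply a gluing construction for actual orbits or weaken the statement to the $2$-approximation of the trace.
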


\begin{proof}
    \begin{eqnarray*}
&   &  \text{ The trace } (\tau_{F}^{n}) \quad  \text{is transitive  }       \\
&  \iff & G_{n}(F) \quad  \text{is strongly connected } \\
& \iff &   \forall s_{\omega},s^{d}_{\omega}\in A^{\omega} \quad  \exists s_{\omega}=s^{0}_{\omega},\dots, s^{T}_{\omega}=s^{d}_{\omega}, F(s^{i})_{[-n/2,n/2]}=(s^{i+1})_{[-n/2,n/2]} \\
&\iff &  \text{every configuration is reachable from any other configuration without applying control.} \\
    \end{eqnarray*}
\end{proof}

Let us now define the controllability problem of CA. 

\section{Characterizing Controllability for Boolean deterministic CA  using notions of  symbolic dynamics } 

Consider the following assumptions \cite{el2003notes}: 
\begin{itemize}
    \item $\Omega$ is a cellular space, 
    \item $I=\{0,1,\dots,T\}$ is a discrete time horizon, 
    
    \item $\Omega_{c_{p}} $ is a subdomain of $\Omega$ formed by  $p$ cells. \\
 
    Let us consider the control space given as follows: 
    
    \begin{equation*}
        \mathcal{U}=C(I \times \Omega_{c_{p}},\mathbb{R})
     \end{equation*}
    It consists of all the bounded controls defined as:

    \begin{eqnarray*}
    U : I \times \Omega_{c_{p}} &\rightarrow &\mathbb{R} \\
           (t,c_{i}) & \rightarrow& U_{t}(c_{i})
    \end{eqnarray*}
    
    Thus, the control operator $G$ is given by: 
    
    \begin{eqnarray*}
    G : \mathcal{U} &\rightarrow & A^{\Omega_{c_{p}}}\\
           U & \rightarrow& GU
    \end{eqnarray*}
    This operator gives the way the control is applied to the CA on the $p$ cells in $A^{\Omega_{c_{p}}}$.

    The maps $G$ associates  to every control variable $U$ a local control function $g: \mathbb{R} \rightarrow  A$ such that: 
    
    	\begin{align}
			 GU_{t}(c)=		\begin{cases}
			 	g(U_{t}(c)) \quad \text{for} \quad c \in \Omega_{c_{p}} \\
			 	0 \quad  \text{elsewhere} 
			 		\end{cases}                
	               \end{align}
	               
    Introducing the characteristic function 
    
    \begin{align}
		\chi_{ \Omega_{c_{p}}}	=		\begin{cases}
			 	1 \quad for \quad c \in \Omega_{c_{p}} \\
			 	0 \quad \text{elsewhere} 
			 		\end{cases}                
	               \end{align}
	               
   one can write, 
    \begin{equation*}
        GU_{t}=g(U_{t}(.)) {\chi_{\Omega_{c_{p}}}}
    \end{equation*}
   
   and so the obtained CA can be considered as a controlled system.

    \begin{definition}\cite{el2003notes}
      A controlled CA can be locally defined by the triple 
      \begin{equation}
          \mathcal{CA}=(f,g,\mathcal{N})
      \end{equation}
      
      where $f$ and $g$ are respectively the local transition and control functions  and $\mathcal{N}$ is the neighbourhood. \\
      
      The global evolution of a controlled CA denoted by   $\mathcal{CA}$ is described in the linear case by the following state equation: 
      
      	\begin{align}
			 		\begin{cases}
			 	s^{t+1}=F(s^{t} +GU_{t})  \\
			 		s^{0} \in A^{\mathbb{Z}} 	\label{equ1CAcontrolled}
			 		\end{cases}                
	               \end{align}
      
      where $F$ is the global dynamics of the autonomous CA and $G$ is the global control function. 
       
    \end{definition}
  
\end{itemize}

\begin{definition}
     An infinite configuration  $s^{t+1} \in A^{\mathbb{Z}}$ is reachable from  another infinite configuration  $s^{t} \in A^{\mathbb{Z}}$ ( and we note $ s^{t} \rightsquigarrow s^{t+1}$) in one step if:  
     \begin{equation*}
        s^{t+1}=F(s^{t} +GU_{t})  
     \end{equation*}
  
\end{definition}

\begin{definition}
    An infinite configuration  $s^{d} \in A^{\mathbb{Z}}$ is reachable from  another infinite configuration  $s \in A^{Z}$ at time $T$ if there exists a sequence of control $U_{i} \quad \forall i=0,\dots,T$ such that: 
    \begin{equation*}
        s=s^{0} \rightsquigarrow s^{1} \rightsquigarrow  \dots \rightsquigarrow s^{T}=s^{d}
    \end{equation*}
   
\end{definition}

\begin{definition}
 A CA is said to be  controllable if every infinite configuration $s'$ is reachable starting from any other infinite configuration $s$ at time $T$.
\end{definition}
\begin{remarque}
      A CA is controllable if and only if  CA is regional controllable for every $n$. 
\end{remarque}
\begin{proof}
     \begin{eqnarray*}
&   &  \text{  A CA is controllable}      \\
&  \iff & \forall  s,s^{d} \in A^{\mathbb{Z}} \quad: \exists s=s^{0}, \dots, s^{T}=s^{d} \quad s  \rightsquigarrow  s^{d} \quad \text{at time } T \\
& \iff & \forall k\in \mathbb{N},  \quad  \forall s,s^{d}\in A^{\mathbb{Z}} \quad  \exists s=s^{0},\dots, s^{T}=s^{d}, \quad F(s^{i})_{[-k,k]}=(s^{i+1})_{[-k,k]} \\
&\iff & \forall k \in \mathbb{N}, \quad G_{k}(F) \quad  \text{is strongly connected } \\
  &\iff & \forall k \in \mathbb{N},\quad  \forall s^{0},s^{d}\in A^{\mathbb{Z}}_{|\omega}: \exists s^{0},\dots,s^{T}=s^{d},  \quad s^{0}  \rightsquigarrow  s^{d} \quad \text{at time T}   \\
&\iff &  \forall k\in \mathbb{N}, \quad \text{A CA is regional controllable.} \\
    \end{eqnarray*}

\end{proof}
It follows that: 
\begin{proposition}
    A CA is controllable if and only if CA is chain transitive.
\end{proposition}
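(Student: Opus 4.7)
The plan is to obtain this result as an immediate consequence of two equivalences that are already available in the paper, chaining them through the common middle condition ``regional controllable for every $n$''. So the proof will be a short transitivity argument rather than a direct analysis of pseudo-orbits.

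First I would invoke the remark just above, which asserts that a CA $(A^{\mathbb{Z}},F)$ is controllable if and only if it is regional controllable for every $n$. This step needs no additional work since the proof of the remark has already been given (by reducing controllability to the strong connectedness of $G_{k}(F)$ for every $k$, which in turn is the combinatorial formulation of regional controllability on every region of size $n$). Next I would invoke the proposition of Section 3.2 establishing that $(A^{\mathbb{Z}},F)$ is chain-transitive if and only if it is regional controllable for every $n$. Combining both biconditionals by transitivity of ``iff'' yields the desired equivalence: controllability $\iff$ regional controllability for every $n$ $\iff$ chain transitivity.

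I would present this in the same display-style equivalence format used throughout the section, namely
\begin{eqnarray*}
\text{A CA is controllable}
&\iff& \forall k\in\mathbb{N},\ \text{CA is regional controllable on a region of size }k \\
&\iff& (A^{\mathbb{Z}},F) \text{ is chain transitive},
\end{eqnarray*}
referring to the remark for the first step and to the earlier proposition for the second.

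I do not expect a genuine obstacle here, since the two ingredients have already been proved and share exactly the same middle term. The only point that deserves a brief mention is that in the earlier proposition ``regional controllable for every $n$'' is precisely the quantified form used in the remark (the cylinder condition $F(s^{i})_{[-k,k]} = (s^{i+1})_{[-k,k]}$ characterises both), so no reformulation is needed to paste the two equivalences together.
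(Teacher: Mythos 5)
Your proposal is correct and matches the paper's own route exactly: the paper derives this proposition as an immediate consequence (``It follows that:'') of the preceding remark (controllable $\iff$ regionally controllable for every $n$) combined with the Section~3.2 proposition (chain-transitive $\iff$ regionally controllable for every $n$). No gaps; your explicit note that the two ingredients share the same quantified middle term is the only point needing care, and you handled it.
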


\begin{proposition}
     A CA $(A^{\mathbb{Z}}, F)$ is controllable if and only if CA is chain-mixing. 
\end{proposition}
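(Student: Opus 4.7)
The plan is to obtain this biconditional essentially for free by chaining together two facts that have already been established in the paper. The remark just above states that a CA is controllable if and only if it is regional controllable for every $n$, and the earlier proposition in Section 3.2 states that a CA is chain-mixing if and only if it is regional controllable for every $n$. Transitivity of the biconditional then yields the claim, so the proof can be presented as a short chain of $\iff$ steps in the same display-style format used for the preceding propositions.

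Concretely, I would write: $F$ is chain-mixing $\iff$ for every $\varepsilon>0$ and every $s,s^{d}\in A^{\mathbb{Z}}$ there is an $N$ such that for all $n\ge N$ an $\varepsilon$-chain of length $n$ connects $s$ to $s^{d}$ $\iff$ for every $k\in\mathbb{N}$ the graph $G_{k}(F)$ is strongly connected with the uniform $N$-step property (using that the metric on $A^{\mathbb{Z}}$ satisfies $d(x,y)<2^{-k}$ exactly when $x$ and $y$ agree on $[-k,k]$) $\iff$ the CA is regional controllable for every $n$ $\iff$ the CA is controllable. The last two equivalences are precisely the earlier proposition and the remark, respectively, so nothing new needs to be proved.

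The only potentially delicate point, which I would flag in one sentence of commentary, is the reconciliation of the two quantifiers: chain mixing asks for arbitrarily small $\varepsilon$, whereas the graph-theoretic statement asks for every window size $k$. These are interchanged by the standard observation that agreement on $[-k,k]$ is equivalent to being within distance $2^{-k}$, so taking $\varepsilon = 2^{-k}$ (resp.\ $k=\lceil-\log_{2}\varepsilon\rceil$) converts one formulation into the other. I do not expect a genuine obstacle; the proof is a routine composition of two equivalences already on the page, and I would mirror the layout of the immediately preceding proposition so the reader can see the parallel with the chain-transitive case.
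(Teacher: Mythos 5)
Your proposal is correct and matches the paper's approach: the paper states this proposition with no separate argument, deriving it (via ``It follows that'') exactly from the remark that controllability is equivalent to regional controllability for every $n$ together with the Section~3.2 equivalence between chain mixing and regional controllability for every $n$. Your extra sentence reconciling the $\varepsilon$ and window-size $k$ quantifiers (via $\varepsilon=2^{-k}$) is a welcome clarification but does not change the route.
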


\begin{proposition}
     A CA $(A^{\mathbb{Z}}, F)$ is controllable   if and only if $S_{n}(F)$ is transitive for every $n$. 
\end{proposition}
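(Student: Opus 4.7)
The plan is to chain together two equivalences that have already been established in the paper, so that the statement follows almost immediately as a corollary. First I would invoke the remark proved just above, which states that a CA is controllable if and only if it is regional controllable for every $n$. This reduces the problem to relating regional controllability to transitivity of $S_n(F)$.

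Second, I would apply the proposition from Section 3.3, which asserts that a CA $(A^{\mathbb{Z}},F)$ is regional controllable for every $n$ if and only if $S_n(F)$ is transitive for every $n$. Combining these two equivalences yields the result. Concretely, the chain of equivalences would be written as
\begin{eqnarray*}
    &  &  \text{A CA } (A^{\mathbb{Z}},F) \text{ is controllable}\\
    &\iff& \forall k \in \mathbb{N}, \text{ the CA is regional controllable on a region of size } k \\
    &\iff& \forall k \in \mathbb{N}, \quad G_k(F) \text{ is strongly connected} \\
    &\iff& \forall k \in \mathbb{N}, \quad S_k(F) \text{ is transitive}.
\end{eqnarray*}
The middle step is exactly the transition-graph characterization of regional controllability, and the final step is the definition of transitivity for an SFT in terms of its associated graph being strongly connected.

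Since both intermediate equivalences are already proven in the paper, there is no genuine obstacle here: the statement is effectively a restatement of the composed equivalences. The only care needed is to ensure that the quantifier on $n$ is preserved throughout, since controllability is a global property on $A^{\mathbb{Z}}$ while $S_n(F)$ depends on the window size $n$, and the bridge is precisely the requirement that the statement holds for every $n \in \mathbb{N}$. One should also briefly note that the SFT $S_n(F)$ is built on the vertex set $V_n = A^n$ with edges given by one-step reachability under boundary controls, so that strong connectedness of $G_n(F)$ and transitivity of $S_n(F)$ are the same condition by the definition of a transitive SFT recalled in Section 3.3.
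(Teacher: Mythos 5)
Your proposal is correct and matches the paper's intent exactly: the paper states this proposition without a separate proof, presenting it as an immediate consequence of the remark that controllability is equivalent to regional controllability for every $n$, combined with the Section 3.3 equivalence between regional controllability for every $n$ and transitivity of $S_n(F)$ via strong connectedness of $G_n(F)$. Your explicit chain of equivalences is precisely the composition the paper relies on.
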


\begin{proposition}
     A CA $(A^{\mathbb{Z}}, F)$ is controllable   if and only if $S_{n}(F)$ is mixing for every $n$. 
\end{proposition}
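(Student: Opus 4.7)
The plan is to prove this proposition by chaining together two earlier equivalences that have already been established in the excerpt, rather than unrolling all the definitions from scratch. Specifically, I would invoke the Remarque stating that a CA is controllable if and only if it is regional controllable for every $n$, and then compose this with the earlier Proposition asserting that a CA is regional controllable for every $n$ if and only if $S_{n}(F)$ is mixing for every $n$. The composition is logically trivial.

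For the write-up, I would present a short equivalence chain mirroring the style used in the preceding four propositions in Section 4. First I would write: a CA $(A^{\mathbb{Z}}, F)$ is controllable iff it is regional controllable for every $n$ (by the Remarque). Then I would continue: iff for every $k \in \mathbb{N}$ and every $s^{0}_{\omega}, s^{d}_{\omega} \in A^{\omega}$ there exists a sequence $s^{0}_{\omega}, \dots, s^{T}_{\omega} = s^{d}_{\omega}$ with $s^{0}_{\omega} \rightsquigarrow s^{d}_{\omega}$ at time $T$; iff $G_{k}(F)$ is strongly connected for every $k$; iff for every $k$ and every $v_{0}, v_{d} \in V_{k}$ there exists $N$ such that for all $n \ge N$ there is a path of length $n$ from $v_{0}$ to $v_{d}$; iff $S_{k}(F)$ is mixing for every $k$. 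Each step is either a definition or has been justified earlier in the paper.

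There is no genuine obstacle here: every step is either a direct application of an equivalence already proven (the Remarque and the earlier mixing proposition), or a restatement of a definition from Section 1 or Section 3. The only thing to be careful about is that the strong-connectedness of $G_{k}(F)$ alone does not give mixing — one needs the additional uniform length condition (the $N$ in the chain-mixing / mixing SFT definition). This was silently absorbed in the earlier proposition's derivation through the chain-mixing characterization, so I would mirror that same step rather than trying to upgrade transitivity to mixing by hand. In other words, the proof is essentially a transitivity-of-iff argument, and the write-up should look nearly identical to the proof of the earlier proposition on mixing SFT, with the initial line replaced by the controllability statement invoked through the Remarque.
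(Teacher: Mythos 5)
Your proposal matches the paper's own treatment: the paper states this proposition (together with its three companions) immediately after the Remarque with only the phrase ``It follows that:'', i.e.\ it is obtained exactly by composing the Remarque (controllable iff regionally controllable for every $n$) with the Section~3 proposition that regional controllability for every $n$ is equivalent to $S_{n}(F)$ being mixing for every $n$. Your added caveat about the gap between strong connectedness and the uniform-length condition for mixing is a fair observation about the earlier proof, but your overall route is the same as the paper's.
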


\color{black}

A minimal system is always transitive, and any transitive system is, in turn, chain-transitive \cite{kurka2003topological}. It follows that:

\begin{proposition}
 A CA $(A^{\mathbb{Z}},F)$ is minimal, then $(A^{\mathbb{Z}},F)$ is controllable. 
\end{proposition}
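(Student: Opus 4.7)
The proof I have in mind is a short chain of implications assembling results already established in the paper and in the cited literature, with no new combinatorial work required. The plan is to argue:
\[
\text{minimal} \;\Longrightarrow\; \text{transitive} \;\Longrightarrow\; \text{chain-transitive} \;\Longleftrightarrow\; \text{controllable}.
\]

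First I would recall the standard fact, stated in the paragraph just preceding the proposition and attributed to \cite{kurka2003topological}, that any minimal topological dynamical system is transitive: indeed, if every orbit is dense, then for any pair of non-empty open sets $U,V \subseteq A^{\mathbb{Z}}$ and any $x \in U$, denseness of the forward orbit of $x$ (or a suitable recurrence argument on $A^{\mathbb{Z}}$) yields some $n>0$ with $F^{n}(x) \in V$, so $F^{n}(U) \cap V \ne \emptyset$. Applied to our CA $(A^{\mathbb{Z}},F)$, minimality thus gives transitivity of $F$ as a topological dynamical system.

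Next I would invoke the equally classical implication, also noted in the sentence preceding the statement, that every transitive system is chain-transitive: given $x,y \in A^{\mathbb{Z}}$ and $\varepsilon > 0$, choose open neighborhoods $U \ni x$, $V \ni y$ of diameter less than $\varepsilon$; by transitivity there exist $z \in U$ and $n>0$ with $F^{n}(z) \in V$, and then the finite sequence $x, F(z), F^{2}(z), \dots, F^{n-1}(z), y$ is an $\varepsilon$-chain from $x$ to $y$ (the first and last jumps being controlled by the diameters of $U$ and $V$). Hence $(A^{\mathbb{Z}}, F)$ is chain-transitive.

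Finally, I would close the argument by applying the Proposition established earlier in the paper, namely that a CA $(A^{\mathbb{Z}},F)$ is controllable if and only if it is chain-transitive. Combining the three steps yields the desired conclusion that minimality implies controllability. The only subtlety worth double-checking is the first step: I want to be sure that the notion of transitive used here (existence of $n>0$ with $F^{n}(U) \cap V \ne \emptyset$) is genuinely implied by minimality in the compact metric setting of $A^{\mathbb{Z}}$; this is standard, so I do not anticipate any real obstacle, and the proof reduces to a clean one- or two-line citation chain.
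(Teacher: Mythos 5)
Your proposal is correct and follows exactly the route the paper intends: the paper states no explicit proof but derives the proposition from the sentence immediately preceding it (``a minimal system is always transitive, and any transitive system is chain-transitive'') together with the earlier equivalence between controllability and chain transitivity. Your write-up merely fills in the standard details of those two classical implications, so there is nothing to add beyond noting that the first jump of your $\varepsilon$-chain really uses uniform continuity of $F$ on the compact space $A^{\mathbb{Z}}$ rather than just the diameter of $U$.
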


\begin{proposition}
 A CA $(A^{\mathbb{Z}},F)$ is transitive, then $(A^{\mathbb{Z}},F)$ is controllable. 
\end{proposition}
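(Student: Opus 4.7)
The strategy is to reduce the statement to something already established in the paper by invoking the chain of equivalences developed in the previous subsections, together with the standard implication from topological dynamics that transitivity entails chain-transitivity. Concretely, the plan is to assume $(A^{\mathbb{Z}},F)$ is transitive and conclude controllability through the chain \emph{transitive} $\Rightarrow$ \emph{chain-transitive} $\Rightarrow$ \emph{controllable}.

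\textbf{Step 1: From transitive to chain-transitive.} First I would recall the classical fact, already stated in the remark preceding the proposition and referenced to \cite{kurka2003topological}, that any transitive dynamical system on a compact metric space is chain-transitive. The underlying reason is that transitivity provides (by Birkhoff's transitivity argument on the Polish space $A^{\mathbb{Z}}$) a point with dense orbit; given any $x,y \in A^{\mathbb{Z}}$ and any $\varepsilon > 0$, pick iterates $F^{a}(z), F^{b}(z)$ of a transitive point $z$ that are $\varepsilon$-close to $x$ and $y$ respectively (with $a < b$), and then the sequence $x, F^{a+1}(z), F^{a+2}(z), \dots, F^{b-1}(z), y$ forms an $\varepsilon$-chain from $x$ to $y$, using that $F$ is uniformly continuous on the compact metric space $A^{\mathbb{Z}}$. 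I would cite this fact rather than reprove it.

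\textbf{Step 2: From chain-transitive to controllable.} Having established chain-transitivity, I would apply the previously proved equivalence of the paper, namely that a CA $(A^{\mathbb{Z}},F)$ is chain-transitive if and only if it is regional controllable for every $n$, combined with the remark that a CA is controllable if and only if it is regional controllable for every $n$. Chaining these two equivalences yields the conclusion that $(A^{\mathbb{Z}},F)$ is controllable.

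\textbf{Main obstacle.} There is essentially no technical obstacle once the previous propositions are available, as both links are either previously proved in the paper or are a classical implication from topological dynamics. The only subtle point I would be careful to state precisely is the step transitive $\Rightarrow$ chain-transitive, since one needs compactness of $A^{\mathbb{Z}}$ and uniform continuity of $F$ (which holds automatically by Hedlund's theorem) to pass from orbit density to the existence of $\varepsilon$-chains. Once these ingredients are in place, the proposition is a direct corollary of the preceding proposition asserting that controllability is equivalent to chain-transitivity.
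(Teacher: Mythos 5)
Your proposal is correct and follows exactly the route the paper intends: the paper states this proposition without proof as an immediate consequence of the classical fact that transitive systems are chain-transitive (cited to Kurka) together with the previously established equivalence between chain-transitivity and controllability. Your additional sketch of the Birkhoff-type argument for transitive $\Rightarrow$ chain-transitive is a sound elaboration of a step the paper simply cites.
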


It is a known result that if a cellular automaton possesses the shadowing property and is chain transitive, then it must also be transitive \cite{kurka2003topological}. Thus, it follows that

\begin{proposition}
     If A CA $(A^{\mathbb{Z}}, F)$ has the shadowing property. In particular, if $(A^{\mathbb{Z}}, F)$ is
controllable, then it is transitive.
\end{proposition}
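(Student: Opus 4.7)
The plan is to chain together two facts that are already in place by this point in the paper. The first is the equivalence established just above: for a cellular automaton $(A^{\mathbb{Z}}, F)$, controllability is the same as chain transitivity (this is the proposition stating ``A CA is controllable if and only if CA is chain transitive''). The second is the classical fact cited from Kůrka \cite{kurka2003topological}: a dynamical system that is chain transitive and additionally has the shadowing (pseudo-orbit tracing) property is topologically transitive. The whole argument is therefore a two-step implication, and no new construction is required.

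Concretely, I would proceed as follows. Assume $(A^{\mathbb{Z}}, F)$ has the shadowing property, and assume in addition that $F$ is controllable. By the proposition recalled above, controllability gives chain transitivity, i.e.\ for every $s, s^{d} \in A^{\mathbb{Z}}$ and every $\varepsilon > 0$ there is an $\varepsilon$-chain $s = s^{0}, s^{1}, \ldots, s^{T} = s^{d}$. Given two nonempty open sets $U, V \subset A^{\mathbb{Z}}$, pick $x \in U$ and $y \in V$, and choose $\varepsilon > 0$ small enough that the $\varepsilon$-ball around $x$ lies in $U$ and the $\varepsilon$-ball around $y$ lies in $V$. Use shadowing to produce $\delta > 0$ witnessing the $\varepsilon$-shadowing of $\delta$-chains. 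Then a $\delta$-chain from $x$ to $y$ (provided by chain transitivity) is $\varepsilon$-shadowed by some $z \in A^{\mathbb{Z}}$, and from $d(z, x) < \varepsilon$ and $d(F^{T}(z), y) < \varepsilon$ one reads off $z \in U$ and $F^{T}(z) \in V$, so $F^{T}(U) \cap V \neq \emptyset$. This is exactly transitivity of $F$.

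The main obstacle, if one can call it that, is merely bookkeeping: ensuring that the notion of transitivity recovered from the shadowing argument matches the one used elsewhere in the paper, and that the metric $d$ and the cylinder basis used for $A^{\mathbb{Z}}$ interact correctly with the quantifiers in the shadowing definition. Since the shadowing statement recalled in Definition 1.4 is already formulated in terms of the same metric $d$ and the same dynamical system $(A^{\mathbb{Z}}, F)$, no further work is needed; the result is a direct corollary of the equivalence ``controllable $\Longleftrightarrow$ chain transitive'' together with the quoted implication ``shadowing $+$ chain transitive $\Longrightarrow$ transitive.''
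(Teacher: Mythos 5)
Your proposal is correct and follows exactly the route the paper takes: it invokes the equivalence between controllability and chain transitivity and then applies the cited fact from Kůrka that shadowing together with chain transitivity yields transitivity. The paper in fact gives no written proof beyond this citation, so your unfolding of the standard shadowing argument (picking $\varepsilon$-balls inside $U$ and $V$ and tracing a $\delta$-chain) is, if anything, slightly more detailed than what appears in the text.
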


Any chain mixing system that possesses the shadowing property is mixing. It follows that: 

\begin{proposition}
      If A CA $(A^{\mathbb{Z}}, F)$ has the shadowing property. In particular, if $(A^{\mathbb{Z}}, F)$ is
controllable, then it is mixing.
\end{proposition}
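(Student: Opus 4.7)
My plan is to reduce the statement to the general topological-dynamics fact that chain mixing combined with the shadowing property yields topological mixing, and then invoke the characterization of controllability as chain mixing that was already established just above.

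First I would separate the hypothesis. The proposition, as in the companion statement for transitivity, should be read as: if $(A^{\mathbb{Z}},F)$ has the shadowing property and is controllable, then it is mixing. By the preceding proposition, controllability is equivalent to chain-mixing of $(A^{\mathbb{Z}},F)$. So the task reduces to showing that a dynamical system on a compact metric space which is simultaneously chain-mixing and has the shadowing property is mixing in the usual topological sense.

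Next I would carry out the standard shadowing argument. Fix non-empty open sets $U,V\subseteq A^{\mathbb{Z}}$, pick $x\in U$ and $y\in V$, and choose $\varepsilon>0$ small enough that $B_{\varepsilon}(x)\subseteq U$ and $B_{\varepsilon}(y)\subseteq V$. Apply the shadowing property to this $\varepsilon$ to obtain a $\delta>0$ such that every finite $\delta$-chain is $\varepsilon$-shadowed by some point. By chain-mixing, there is $N\in\mathbb{N}$ such that for every $n\ge N$ there exists a $\delta$-chain $x=x_{0},x_{1},\dots,x_{n}=y$. The shadowing property then supplies a point $z\in A^{\mathbb{Z}}$ with $d(F^{i}(z),x_{i})<\varepsilon$ for all $0\le i\le n$; in particular $d(z,x)<\varepsilon$, so $z\in U$, and $d(F^{n}(z),y)<\varepsilon$, so $F^{n}(z)\in V$. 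Hence $F^{n}(U)\cap V\neq\emptyset$ for all $n\ge N$, which is exactly the mixing condition.

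I do not anticipate a genuine obstacle: both ingredients, namely the equivalence controllable $\iff$ chain-mixing and the lemma chain-mixing $+$ shadowing $\implies$ mixing, are already available, the latter being a classical result from topological dynamics (the same template used in the immediately preceding proposition for the transitive case). The only delicate point is making sure to apply shadowing \emph{after} picking $\varepsilon$ and \emph{before} invoking chain-mixing to select $N$ uniformly in $n\ge N$, so that the length-$n$ $\delta$-chain can be shadowed to give a single point $z$ witnessing $F^{n}(U)\cap V\neq\emptyset$ for every large $n$.
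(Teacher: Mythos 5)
Your proposal is correct and follows exactly the route the paper takes: the paper simply invokes the classical fact that a chain-mixing system with the shadowing property is mixing, together with the previously established equivalence between controllability and chain-mixing. The only difference is that you additionally write out the standard shadowing argument proving that classical fact, which the paper leaves as a citation.
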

To move forward with the next set of results, we require certain concepts related to dynamics.

\begin{definition}\cite{kurka2003topological}
    Let $ p> 0$. A word $x \in A^{*}$ with $|x| \ge  p$ is p-blocking for a CA $(A^\mathbb{Z}, F)$, if there
exists an offset $k \in  [0; |x|-p]$ such that 
\begin{equation*}
    \forall s, s' \in [x]_{0}, \forall t \ge 0  \quad F^{t}(s)_{[k,k+p)}=F^{t}(s')_{[k,k+p)}
\end{equation*}
with $[x]_{0}=\{s \in A^{\mathbb{Z}}: s_{[0,m)}=x\}$

\begin{figure}[H]
    \centering
    \includegraphics[width=13cm,height=3cm]{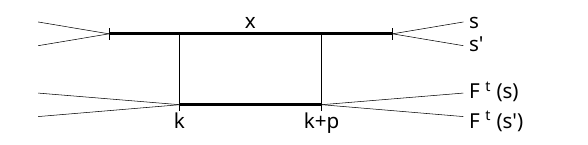}
    \caption{A blocking word }
    \label{blocking-word}
\end{figure}

\end{definition}

\begin{definition}\cite{salo2013color}
    A set of words $W \subset \mathcal{B}_{\ell}(X)$ ($\mathcal{B}_{\ell}(X)$ a set of words of length $\ell $ appearing in configurations of $X$) is visibly blocking for $F$ if
    \begin{itemize}
        \item for all $s \in  X$, if $s_{[0,\ell-1]} \in  W$, iff $F(s)_{[0,\ell-1] } \in  W$, and
        \item for all $s, s' \in  X$ such that $s_{[0,\ell-1]} \in  W$ and $s_{i} = s'_{i}$ for all $i \ge 0$ $(i \le  \ell -1)$,
we have $F^{t}(s)_{i} = F^{t}(s')_{i}$  for all $t \in  \mathbb{N}$ and $i \ge  \ell $ ($i < 0$, respectively).
    \end{itemize}
\end{definition}

\begin{definition}
    \begin{itemize}
          \item A configuration $s$ is called temporally periodic (or just periodic) for $F$ if $F^{p'}(s)=s$ for some period $p' >0$, and it is called (temporally) eventually periodic for $F$ if $F^{m+p'}(s)=F^{m}(s)$ for some pre-period $m \ge 0$ and period $p' >0$. 
        \item A CA is called periodic (eventually periodic) if all configurations are periodic (eventually periodic, respectively).
          \end{itemize}
\end{definition}

\begin{proposition}\label{kurka}\cite{kurka1997languages}
    Let $(A^{\mathbb{Z}}, F)$ be a CA with radius $r > 0$. The following conditions are equivalent.
    \begin{enumerate}
        \item $(A^{\mathbb{Z}}, F)$ is equicontinuous.
        \item There exists $k' > 0$ such that any $x\in A^{2k'+1}$ is $r$-blocking.
        \item There exists a preperiod $m \ge 0$ and a period $p' > 0$, such that $F^{m+p'} = F^{m}$.
    \end{enumerate}

\end{proposition}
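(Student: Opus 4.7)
The plan is to establish the cyclic chain of implications $(3) \Rightarrow (1) \Rightarrow (2) \Rightarrow (3)$. Each step is short conceptually, but the step $(2) \Rightarrow (3)$ contains the substantive combinatorial work.

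For $(3) \Rightarrow (1)$, I would use that the equality $F^{m+p'} = F^{m}$ collapses the infinite family $\{F^n\}_{n \ge 0}$ to the finite list $F^0, F^1, \dots, F^{m+p'-1}$, since $F^n = F^{m+((n-m) \bmod p')}$ for $n \ge m$. Each of these finitely many maps is continuous on the compact space $A^{\mathbb{Z}}$, hence uniformly continuous. Given $\varepsilon > 0$, taking the minimum of the moduli of uniform continuity of these maps at level $\varepsilon$ yields a single $\delta > 0$ that witnesses equicontinuity simultaneously for every iterate, which gives (1).

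For $(1) \Rightarrow (2)$, by compactness of $A^{\mathbb{Z}}$ I would promote pointwise equicontinuity to uniform equicontinuity. Applying this with $\varepsilon = 2^{-r}$ produces $\delta = 2^{-k'}$ (with $k' \ge r$) such that $s_{[-k',k']} = z_{[-k',k']}$ implies $F^t(s)_{[-r,r]} = F^t(z)_{[-r,r]}$ for every $t \ge 0$. Translating by the shift so that the determining cylinder is anchored at position $0$, this becomes the assertion that for every $x \in A^{2k'+1}$ there is an offset inside $[0, 2k'+1 - r]$ (namely one for which the window of length $r$ lies centrally inside $x$) making the $r$-cell block of $F^t(s)$ independent of the choice of $s \in [x]_0$ and of $t$. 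Thus every $x \in A^{2k'+1}$ is $r$-blocking.

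For $(2) \Rightarrow (3)$, the main step, my goal would be to show that the blocking condition forces every iterate $F^t$ to be a cellular automaton whose local rule fits inside a window of size bounded independently of $t$. Concretely I would produce an integer $K \ge k'$ such that for every $s \in A^{\mathbb{Z}}$ and every $t \ge 0$ the value $F^t(s)_0$ is a function only of $s_{[-K,K]}$. The difficulty lies in the fact that the blocking offset $j(x) \in [0, 2k'+1 - r]$ is allowed to depend on $x$, so the unique blocking window attached to $s_{[-k',k']}$ need not contain cell $0$. To overcome this I would exploit that there are only finitely many words of length $2k'+1$ and at most $2k'+2 - r$ possible offsets: applying the blocking property at all translates $s_{[m, m+2k'+1)}$ for $m$ ranging over a sufficiently long interval around $0$, a pigeonhole argument on the finite set of offsets shows that some translate places its length-$r$ blocking window over cell $0$, so $s_{[-K,K]}$ determines $F^t(s)_0$ uniformly in $t$. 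This is the principal obstacle, since it requires ruling out configurations whose successive offsets could in principle conspire to skip over cell $0$. Once the bounded-radius conclusion is established, the sequence $\{F^t\}_{t \ge 0}$ lives inside the finite set of cellular automata of radius at most $K$, of cardinality at most $|A|^{|A|^{2K+1}}$; the pigeonhole principle then forces $F^{m+p'} = F^m$ for some $m \ge 0$ and $p' > 0$, giving (3).
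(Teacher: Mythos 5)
The paper does not prove this proposition at all: it is imported verbatim from Kurka's 1997 paper via the citation \cite{kurka1997languages}, so there is no in-paper argument to compare against. Judged on its own, your cyclic scheme $(3)\Rightarrow(1)\Rightarrow(2)\Rightarrow(3)$ is the standard one, and the first two legs are fine: $(3)\Rightarrow(1)$ because $\{F^n\}_{n\ge 0}$ collapses to a finite family of uniformly continuous maps, and $(1)\Rightarrow(2)$ because compactness upgrades equicontinuity to uniform equicontinuity, which with the metric $d$ translates exactly into the blocking statement.

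The gap is in the step you yourself flag as the principal obstacle inside $(2)\Rightarrow(3)$. The pigeonhole on the finite set of offsets does \emph{not} show that some translate places its length-$r$ blocking window over cell $0$. The offset $j(x)\in[0,2k'+1-r]$ is an arbitrary function of the word $x$, so as $m$ increases by $1$ the column start $m+j(s_{[m,m+2k'+1)})$ can jump by as much as $2k'+2-r$, which exceeds $r$; the successive columns can therefore genuinely skip over cell $0$, and one can build offset functions and configurations for which no column covers it. Having two translates with equal offset (which is all pigeonhole gives you) does not locate a column at the origin. The correct repair is that you do not need a column over cell $0$ at all: take one window of length $2k'+1$ lying entirely in $[-(2k'+1),-1]$ and one lying entirely in $[1,2k'+1]$. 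Wherever their offsets fall, their determined columns of width $r$ sit respectively to the left and to the right of cell $0$. Since $F$ has radius $r$, an induction on $t$ shows that the space-time strip between the two columns is determined by the two columns together with the initial segment between them: to compute time $t+1$ at a position strictly between the columns one only consults, at time $t$, positions that are either inside a column (known from the blocking words) or strictly between them (known by induction). Hence $F^t(s)_0$ depends only on $s_{[-K,K]}$ with $K=2k'+1+r$, uniformly in $t$. From there your conclusion is correct: all iterates are cellular automata of radius at most $K$, there are finitely many such maps, so $F^{t_1}=F^{t_2}$ for some $t_1<t_2$, and composing with $F$ gives $F^{m+p'}=F^{m}$ with $m=t_1$ and $p'=t_2-t_1$.
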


The proof of the following proposition builds upon proposition \ref{kurka}.

\begin{proposition}
    Existence of visibly blocking words implies that CA $(A^{\mathbb{Z}},F)$ is not controllable.
    \end{proposition}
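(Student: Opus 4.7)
The plan is to invoke the characterization already proved in this section: a CA $(A^{\mathbb{Z}},F)$ is controllable if and only if the transition graph $G_{k}(F)$ is strongly connected for every $k\in\mathbb{N}$ (equivalently, iff $F$ is chain-transitive). It therefore suffices to produce a single $k$ for which $G_{k}(F)$ fails to be strongly connected.

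Let $W\subset A^{\ell}$ be a nonempty visibly blocking set for $F$. The first bullet of the definition gives $s_{[0,\ell-1]}\in W$ iff $F(s)_{[0,\ell-1]}\in W$ for every $s\in A^{\mathbb{Z}}$. Since $F(s)_{[0,\ell-1]}$ depends only on $s_{[-r,\ell-1+r]}$, this translates directly into the transition graph $G_{\ell}(F)$: for every vertex $w\in W$ and every boundary control $(x,y)\in(A^{r})^{2}$ the image $F(xwy)_{[0,\ell-1]}$ lies in $W$, so every outgoing edge from $w$ stays inside $W$; symmetrically, every outgoing edge from a vertex $v\notin W$ stays inside $A^{\ell}\setminus W$. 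Hence both $W$ and its complement in $A^{\ell}$ are forward-invariant under the graph dynamics.

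In the generic case $W\subsetneq A^{\ell}$ I would pick $w\in W$ and $w'\in A^{\ell}\setminus W$; by the invariance above there is no directed path in $G_{\ell}(F)$ from $w$ to $w'$, so $G_{\ell}(F)$ is not strongly connected and $F$ is not regional controllable at size $\ell$, hence not controllable. The same conclusion can be phrased on the level of chain transitivity: for $\varepsilon<2^{-\ell}$ every $\varepsilon$-chain satisfies $(s_{i+1})_{[0,\ell-1]}=F(s_{i})_{[0,\ell-1]}$ at every step, so a chain started in the cylinder $\{s:s_{[0,\ell-1]}\in W\}$ stays there forever and can never reach a configuration whose window lies outside $W$.

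The only case not covered by the invariance argument is the degenerate $W=A^{\ell}$, where the first condition of visible blocking is vacuous. Here the working hypothesis becomes the second bullet, which forbids information from crossing the window $[0,\ell-1]$ in either direction for all $t\in\mathbb{N}$. Because a CA with radius $r$ propagates information at speed $r$, iterating this non-propagation over all $t$ forces every word of length $\ell$ to be $r$-blocking in Kurka's sense; Proposition \ref{kurka} then yields that $(A^{\mathbb{Z}},F)$ is equicontinuous and eventually periodic, which is incompatible with controllability (as the identity-rule example in Section 2 already illustrates). I expect the generic case to go through immediately, and the main technical care to be required in this last reduction from Salo-visible blocking to Kurka $r$-blocking.
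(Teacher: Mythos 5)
Your proof follows essentially the same route as the paper's: the same two-case split on whether $W=A^{\ell}$, with the degenerate case reduced to blocking words and equicontinuity via Proposition \ref{kurka}, and the generic case settled by the forward-invariance of the set of $W$-cylinders under every controlled transition, which destroys strong connectivity of $G_{\ell}(F)$ and hence chain transitivity. Your Case 2 is in fact spelled out more explicitly than the paper's (which only asserts that no chain can pass from $[u]$ to $[v]$), but the underlying argument is identical.
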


\begin{proof}
    Let $W$ be a set of visibly blocking words of length $\ell$ of a CA. By definition $W \ne \emptyset$, so there is a word of length $\ell$ in $W$. We will prove this claim by considering two cases.\\ 
    Case 1:  $W$ contains all words of length $\ell$ then all sufficiently long words are blocking ( words of length $\ell+2r$ are $\ell$-blocking) and so $F$ is eventually periodic. Eventually periodic CA are equicontinuous, therefore are clearly not chain transitive thus not controllable. \\
    Case 2: Assume that there exists a word $v$ of length $\ell$ such that $v\notin W$. Let $[u]$ denote the cylinder set determined by the word $u$, and let $[v]$ denote the cylinder set determined by $v$, both at the same position. Since $v\notin W$, it follows that there is no sequence of allowed transitions that can form a chain from the cylinder $[u]$ to the cylinder $[v]$. Thus $F$ is not chain transitive, clearly not controllable, as required.
  
\end{proof}

\section*{Conclusion}
The problem of controllability is considered as one one of the most prominent problems related to control theory. This work launched  the problems of controllability and 
regional controllability of one dimensional determinitic celular automata using topological and  symbolic dynamics notions.  We have proved the controllability and  regional controllability of CA through an original approach. Firstly,  We have proved that CA that are chain transitive (mixing) are regional controllable for every $n$ and that transitive (mixing) SFT for every $n$  are equivalent to CA that are  regional controllable for every $n$. We have also shown the equivalence between the approximation trace and regional controllability of CA. Finally, we finish by showing that CA that is regional controllable for every $n$ is equivalent to CA that are controllable. Some necessary and sufficient conditions are obtained using the notions: chain transitive ( mixing), transitive (mixing) SFT. Finally, we demonstrate that the existence of visibly blocking words implies that CA is not controllable.

\section*{Acknowledgements}

Special thanks are extended to Pierre Guillon and Ville Salo for their thoughtful comments and valuable feedback, which significantly contributed to the refinement of this work.

\bibliographystyle{acm}

\end{document}